\newtheorem{proposition}[equation]{Proposition}
\newtheorem{theorem}[equation]{Theorem}
\newtheorem{corollary}[equation]{Corollary}
\newtheorem{lemma}[equation]{Lemma}
\theoremstyle{definition}
\newtheorem{definition}[equation]{Definition}
\newtheorem{remark}[equation]{Remark}
\numberwithin{equation}{section}
\begin{document}
\bibliographystyle{plain} \title[Amenability and critical exponents]
{Amenability, critical exponents of subgroups and growth of closed geodesics} 

\author{Rhiannon Dougall}
\address{Mathematics Institute, University of Warwick,
Coventry CV4 7AL, U.K.}
\email{R.Dougall@warwick.ac.uk}
\author{Richard Sharp} 
\address{Mathematics Institute, University of Warwick,
Coventry CV4 7AL, U.K.}
\email{R.J.Sharp@warwick.ac.uk}


\keywords{}


\begin{abstract}
Let $\Gamma$ be a (non-elementary)
convex co-compact group of isometries of a pinched Hadamard manifold $X$.
We show that a normal subgroup $\Gamma_0$ has critical exponent equal to the critical 
exponent of 
$\Gamma$ if and only if $\Gamma/\Gamma_0$ is amenable. We prove a similar result 
for the exponential growth rate of closed geodesics on $X/\Gamma$. 
These statements are analogues of classical results of Kesten for random walks on groups 
and Brooks for the spectrum of the Laplacian on covers of Riemannian manifolds.
\end{abstract}

\maketitle

%
%
%
%
%
%
%
%
%

\section{Introduction}\label{in}

Let $X$ be a connected simply connected and complete
Riemannian manifold with sectional curvatures bounded between two negative constants. 
We call such an $X$ a {\it pinched Hadamard manifold}. Let $\Gamma$ be a {\it non-elementary}
and
{\it convex co-compact}
group of isometries of $X$ (see section 2 for precise definitions) and let 
$\Gamma_0 \triangleleft \Gamma$ be a normal subgroup. We write $G= \Gamma/\Gamma_0$.
We define the {\it critical exponent}
of $\Gamma$ to be the abscissa of convergence of the Dirichlet series
\[
\sum_{g \in \Gamma} e^{-sd_X(o,go)},
\]
for any choice of base point $o \in X$, and denote it by $\delta(\Gamma)$. The critical exponent
$\delta(\Gamma_0)$
of $\Gamma_0$ is defined in the same way. The fact that $\Gamma$ is non-elementary 
and convex co-compact means that $\delta(\Gamma)>0$ and it is clear that $\delta(\Gamma_0)
\leq \delta(\Gamma)$. It is natural to ask when we have equality
and our main result will give a precise answer to this question, which will depend only on $G$ as an abstract group. (We will discuss the history of this and related problems in the next section.)

Before stating our result, we introduce an alternative formulation. Consider
the quotient manifolds $M= X/\Gamma$ and $\widetilde M = X/\Gamma_0$;
$\widetilde M$ is a regular cover of $M$ with covering group $G$.
Then $M$ has a countably infinite set 
$\mathcal C(M)$ of closed geodesics (which are not assumed to be prime). 
For $\gamma \in \mathcal C(M)$, 
we write $l(\gamma)$ for its length. For each $T>0$, the set 
$\{\gamma \in \mathcal C(M) \hbox{ : } l(\gamma) \leq T\}$ is finite and we can  define a
 number $h=h(M)>0$ by
\[
h(M) = \lim_{T \to \infty} \frac{1}{T} \log 
\#\{\gamma \in \mathcal C(M) \hbox{ : } l(\gamma) \leq T\}.
\]
(The limit exists and, in fact, 
$\lim_{T \to \infty} T e^{-h(M)T} \#\{\gamma \in \mathcal C(M) \hbox{ : } l(\gamma) \leq T\} =1/h(M)$
\cite{PP, Perry}.)
Similarly, we write $\mathcal C(\widetilde M)$ for the set of closed geodesics on $\widetilde M$.
If $G$ is infinite then, for a given $T$, the set 
$\{\gamma \in \mathcal C(\widetilde M) \hbox{ : } l(\gamma) \leq T\}$ is infinite
(since a single closed geodesic has infinitely many
images under the action of $G$). However, we can obtain a finite quantity by 
choosing any relatively compact open subset $W$ of the unit-tangent
bundle $S\widetilde M$ 
that intersects 
the non-wandering set for the geodesic flow
and considering
the set
\[
\mathcal C(\widetilde M,W) = \{\gamma \in \mathcal C(\widetilde M) \hbox{ : } 
\hat \gamma \cap W \neq \varnothing\},
\]
where $\hat \gamma$ is the periodic orbit for the geodesic flow lying over the closed geodesic
$\gamma$.
Following \cite{PPS}, we then define
 $h(\widetilde M) \leq h(M)$
by
\[
h(\widetilde M) = \limsup_{T \to \infty} \frac{1}{T} \log\#\{\gamma \in \mathcal C(\widetilde M,W) \hbox{ : }
l(\gamma) \leq T\}
\]
and this is independent of the choice of $W$.
Again we may ask when we have equality.

It is well known that $h(M) = \delta(\Gamma)$, and both are equal to the topological 
entropy of the geodesic flow over $X/\Gamma$. Furthermore, the work 
of Paulin, Pollicott and Schapira in 
\cite{PPS} shows that
$h(\widetilde M) = \delta(\Gamma_0)$ (and that the limsups in the definitions are, in fact, limits). 
We will discuss this further in section 2.

Our main result is the following.

\begin{theorem}\label{main} 
Let $\Gamma$ be a convex co-compact group of isometries of a pinched Hadamard manifold
$X$ and let $\Gamma_0$ be a normal subgroup of $\Gamma$. Then the following are equivalent:
\begin{enumerate}
\item[(i)]
$\delta(\Gamma_0) = \delta(\Gamma)$,
\item[(ii)]
$h(X/\Gamma_0) = h(X/\Gamma)$,
\item[(iii)]
$G = \Gamma/\Gamma_0$ is amenable.
\end{enumerate}
\end{theorem}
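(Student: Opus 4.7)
The implications (i) $\Leftrightarrow$ (ii) are immediate from the identifications $h(M) = \delta(\Gamma)$ and $h(\widetilde{M}) = \delta(\Gamma_0)$ already quoted from \cite{PPS}, so the content of the theorem lies in (i) $\Leftrightarrow$ (iii). My plan is to translate both the critical exponent and the amenability hypothesis into the language of thermodynamic formalism. Convex co-compactness of $\Gamma$ makes the non-wandering set of the geodesic flow on $SM$ compact and uniformly hyperbolic, so Bowen's coding construction yields a two-sided subshift of finite type $(\Sigma,\sigma)$ and a H\"older roof function $r:\Sigma \to \mathbb{R}_{>0}$ whose suspension is semi-conjugate (up to boundary identifications that do not affect exponential asymptotics) to the restricted geodesic flow. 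Under this coding, closed geodesics on $M$ correspond to closed $\sigma$-orbits, and $h(M)$ is the unique zero of $s \mapsto P(-sr)$. The cover $\widetilde{M} \to M$ is encoded by a H\"older cocycle $\psi:\Sigma \to G$; closed geodesics on $\widetilde{M}$ meeting a fixed set $W$ correspond to periodic points $x$ of period $n$ for $\sigma$ with $\psi^{(n)}(x) = e$, and $h(\widetilde{M})$ is the unique zero of the analogous pressure associated to the group extension $\widehat{\sigma}(x,g) = (\sigma x,\, g\psi(x))$ on $\Sigma \times G$.

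After this reduction the theorem becomes a dynamical Kesten-type assertion: the spectral radius of the Ruelle transfer operator $\mathcal{L}_{-sr}$ agrees with that of its twisted version on $\Sigma \times G$ at $s = h(M)$ if and only if $G$ is amenable. For the direction (iii) $\Rightarrow$ (i) I would normalise $\mathcal{L}_{-h(M)r}$ using its Perron--Frobenius eigenfunction so that it becomes a Markov operator. Feeding in test functions of the form $h(x)\,|F_n|^{-1/2}\mathbf{1}_{F_n}(g)$, where $(F_n)$ is a F\o lner sequence in $G$, the almost-invariance of $F_n$ under multiplication by $\psi$-values produces an approximate eigenfunction with eigenvalue $1$ for the twisted operator. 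This forces its spectral radius to be at least $1$, whence $\delta(\Gamma_0) \geq \delta(\Gamma)$; the reverse inequality is automatic.

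The hard direction (i) $\Rightarrow$ (iii) is where I expect the main obstacle to lie. I would argue the contrapositive: assuming $G$ is non-amenable, prove $\delta(\Gamma_0) < \delta(\Gamma)$. The idea is to compare the twisted transfer operator at $s = h(M)$ with the convolution operator on $\ell^2(G)$ of a symmetric random walk whose step distribution is obtained by pushing forward, through $\psi$, the equilibrium state of $-h(M)r$ restricted to one-step cylinders. Kesten's classical theorem then gives a strict spectral gap for this convolution operator, provided one first verifies that the walk is supported on a generating subset of $G$; this follows from non-elementarity of $\Gamma$ together with the fact that $\psi$ implements the deck action. The crux is to transfer this gap from the i.i.d.\ random-walk setting to the non-i.i.d.\ thermodynamic setting over a hyperbolic base, which requires Gibbs-property comparisons and, when $G$ is non-abelian, a Day-type averaging argument. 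This step, in the spirit of work of Stadlbauer and of Jaerisch on group-extended symbolic systems, is the technical heart of the argument; once in hand, strict inequality of spectral radii produces $P(-h(M)r)$ strictly positive for the $G$-extended system, hence $\delta(\Gamma_0) < \delta(\Gamma)$.
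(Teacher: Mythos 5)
Your reduction is exactly the one the paper uses: Bowen's coding of the geodesic flow on the non-wandering set, a cocycle $\psi$ into $G$ recording how orbits lift, the skew product $\widetilde\sigma(x,g)=(\sigma x,\,g\psi(x))$ on $\Sigma_A^+\times G$, and the identification of $h(M)$ and $h(\widetilde M)$ with the zeros of the untwisted and twisted pressures (the latter requiring the zeta-function/Bowen--Manning bookkeeping and the multiplicity comparison that you compress into ``boundary identifications that do not affect exponential asymptotics''). The divergence is in how the Kesten-type dichotomy itself is handled. The paper does not reprove it: after verifying its hypotheses it quotes Stadlbauer's amenability criterion for group extensions of Markov shifts \cite{Stad} ($P_G(\widetilde\sigma,\widetilde f)=P(\sigma,f)$ if and only if $G$ is amenable), and both directions of the equivalence with amenability follow at once. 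You propose instead to prove the dichotomy directly with transfer operators; your F\o lner/approximate-eigenfunction sketch for (iii) $\Rightarrow$ (i) is fine in outline (this direction is also Roblin's theorem), but for the hard direction (non-amenable $\Rightarrow$ strict gap) your text stops precisely at the crux --- transferring Kesten's spectral gap from the i.i.d.\ random walk to the non-i.i.d.\ thermodynamic extension --- and labels it ``the technical heart of the argument''. That step \emph{is} the theorem of Stadlbauer (and Jaerisch \cite{Jaerisch}) which the paper imports; as written your proposal contains no argument for it, so this is a genuine gap rather than an alternative proof.

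Even if you simply cite Stadlbauer's criterion to close that gap, two hypotheses still have to be verified, and your proposal does not address them. First, symmetry: the criterion needs a fixed-point-free involution $\kappa$ of the alphabet with $A(\kappa j,\kappa i)=A(i,j)$, a cocycle satisfying $\psi(\kappa j,\kappa i)=\psi(i,j)^{-1}$, and weak symmetry of the roof function. The paper arranges this by choosing Markov sections compatible with the time-reversal involution $\iota(x,v)=(x,-v)$ (Adachi \cite{Ad}, Rees \cite{Rees}), proving $g(\kappa j,\kappa i)=g(i,j)^{-1}$ in Lemma \ref{technicalclaim}, and checking weak symmetry of $r$ in Lemma \ref{risweaksymm}; you assert that the auxiliary random walk on $G$ is symmetric but never say where that symmetry comes from, and without it any Kesten-type conclusion is false. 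Second, topological transitivity of $\widetilde\sigma$ on $\Sigma_A^+\times G$ is needed (the paper deduces it from transitivity of the geodesic flow on $S\widetilde M$); the fact that the step distribution generates $G$ is weaker and not sufficient. Finally, a small slip in your last sentence: in the non-amenable case the twisted pressure at $-h(M)r$ is strictly \emph{negative}, which is what pushes its zero, i.e.\ $h(\widetilde M)=\delta(\Gamma_0)$, strictly below $h(M)$.
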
 

In view of the equivalence of (i) and (ii) discussed above, this will follow from Theorem \ref{mainresultforclosedgeodesics} 
below, in which we prove the equivalence of (ii) and (iii).

\begin{remark}
In fact, that (iii) implies (i) is 
a theorem of Roblin \cite{Roblin} (see also the expository
account in
\cite{RoblinTapie}), which actually applies in the more general situation where $X$ is a CAT($-1$) space.
\end{remark}

\begin{remark}  In the special case where $X = \mathbb H^{d+1}$ and 
$\delta(\Gamma)>d/2$, the
 statement
$\delta(\Gamma_0) = \delta(\Gamma)$ if and only if $G$ is amenable is a result of Brooks
\cite{Brooks85} (and holds when $\Gamma$ is geometrically finite, which is a more general
condition than convex co-compactness). We discuss this in more detail in the next section. If $X = 
\mathbb H^{d+1}$
and $\Gamma$ is essentially free then Stadlbauer showed the same result holds without the 
assumption
$\delta(\Gamma)>d/2$ \cite{Stad}. The class of essentially free groups includes
all non-co-compact geometrically finite Fuchsian groups (i.e. discrete groups of isometries
of $\mathbb H^2$) and all Schottky groups.
\end{remark}

We conclude the introduction by outlining the contents of the paper. In the next section, we define
the key concepts associated to groups that are mentioned above
and discuss some history of this and analogous problems. Our approach to Theorem 
\ref{main} is via dynamics. More precisely, we consider the geodesic flow over $M$ and $\widetilde 
M$ and a class of symbolic dynamical systems that model them. These symbolic systems
belong to a class called countable state Markov shifts: we introduce these in section 3 and define
a key quantity called the Gurevi\v{c} pressure. We also mention recent results of Stadlbauer
that will be key to our analysis.
In sections 4, 5 and 6, we consider the geodesic flows over $M$ and $\widetilde M$ and discuss
how they may be modeled by symbolic systems, particular using a skew product construction to 
record information about lifts to the cover. Finally, in section 7, we link various zeta functions 
defined by the 
closed geodesics to the Gurevi\v{c} pressure and hence, using Stadlbauer's result, prove that 
the equality of $h(M)$ and $h(\widetilde M)$ is equivalent to amenability of the covering group.

\section{Background and History}

Let $X$ be a pinched Hadamard manifold, i.e.~a connected simply connected complete 
Riemannian manifold such that its sectional curvatures lie in a interval $[-\kappa_1,-\kappa_2]$,
for some $\kappa_1>\kappa_2>0$. 
Associated to $X$ is a well defined topological space $\partial X$ called the Gromov boundary.
This can be defined to be the set of equivalence classes of geodesic rays emanating
from a fixed base point, where two rays are equivalent it their distance apart is bounded above.
Let $\Gamma$ be a group of isometries acting freely and 
properly discontinuously on $X$. We say that $\Gamma$ is {\it non-elementary} if 
it is not a finite extension of a cyclic group. Fix $o \in X$. Then the orbit
$\Gamma o = \{go \hbox{ : } g \in \Gamma\}$ accumulates only on $\partial X$ and we call 
the set of accumulation points $L_\Gamma$ the {\it limit set} of $\Gamma$; this is independent
 of the 
choice of
$o$. Let $C(\Gamma)$ denote the intersection of $X$ with the convex hull 
(with respect to the metric on $X$) of $L_\Gamma$. We say that $\Gamma$ is 
{\it convex co-compact} if $C(\Gamma)/\Gamma$
is compact.
If $\Gamma$ is convex co-compact then $M = X/\Gamma$ has a compact core:
a manifold with boundary $M_0$ which
contains $\mathcal C(M)$.

Now suppose that $\Gamma_0 \triangleleft \Gamma$ and that $\widetilde M = X/\Gamma_0$.
Then $\widetilde M$ is a regular $G = \Gamma/\Gamma_0$-cover of $M$.
Let $\pi : \widetilde M \to M$ denote the projection.
It is shown in \cite{PPS} that if $W$ is a relatively compact open subset of 
the unit-tangent bundle $S\widetilde M$ intersecting the non-wandering set for the geodesic flow
$\widetilde\phi_t : S\widetilde M \to S\widetilde M$
for any $c>0$,
\[
\delta(\Gamma_0) = \lim_{T\to \infty} \frac{1}{T} \log
\#\{\gamma \in \mathcal C(\widetilde M,W) \hbox{ : }
T-c < l(\gamma) \leq T\}.
\]
(To see this, take $F=0$ in Theorem 1.1 of \cite{PPS}.)

\begin{lemma} \label{equalityofhanddelta}
We have $h(M) = \delta(\Gamma_0)$.
\end{lemma}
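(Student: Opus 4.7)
The right-hand side of the displayed formula above (from \cite{PPS}) gives the exponential growth rate of the \emph{annular} counting function $\#\{\gamma \in \mathcal C(\widetilde M, W) : T - c < l(\gamma) \le T\}$ as $\delta(\Gamma_0)$, and this is asserted as a genuine limit, valid for any fixed $c > 0$. The quantity $h(\widetilde M)$, on the other hand, is defined via the \emph{cumulative} count $\#\{\gamma \in \mathcal C(\widetilde M, W) : l(\gamma) \le T\}$. The plan is to deduce the claimed equality by a standard transfer argument between annular and cumulative counts, which will also automatically upgrade the limsup defining $h(\widetilde M)$ to a genuine limit.

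The lower bound $h(\widetilde M) \ge \delta(\Gamma_0)$ is immediate, since the cumulative count dominates the annular count term-by-term. For the upper bound I would fix $\epsilon > 0$ and a small $c > 0$ and use the PPS asymptotic to choose $T_0$ with $\#\{T - c < l(\gamma) \le T\} \le e^{(\delta(\Gamma_0) + \epsilon) T}$ for all $T \ge T_0$. Partitioning $(0, T]$ into intervals $(kc, (k+1)c]$ and applying this bound to the large-$k$ annuli, the cumulative count is at most a constant (absorbing the $O(T_0/c)$ small-$k$ annuli) plus a geometric series with common ratio $e^{(\delta(\Gamma_0) + \epsilon) c} > 1$; the sum is therefore controlled by its largest term, giving $\#\{l(\gamma) \le T\} \le C e^{(\delta(\Gamma_0) + \epsilon) T}$. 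Letting $\epsilon \to 0$ yields $h(\widetilde M) \le \delta(\Gamma_0)$, and combining with the lower bound shows that the limsup in the definition of $h(\widetilde M)$ is in fact a limit equal to $\delta(\Gamma_0)$.

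The argument is essentially routine: the only subtlety is in packaging the PPS limit as a uniform upper bound on the annular count for all sufficiently large $T$, but this is handled cleanly by the definition of a limit. There is no substantive obstacle — the lemma is a bookkeeping step that converts the PPS annular asymptotic into a statement about the cumulative growth rate that will appear in the main theorem.
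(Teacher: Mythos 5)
Your proof is correct and takes essentially the same route as the paper: both deduce the lemma (which, as you implicitly and correctly read it, concerns $h(\widetilde M)$ --- the $h(M)$ in the statement is a typo) from the quoted PPS annular-count limit by comparing annular and cumulative counts. The only difference is that your uniform $\epsilon$/geometric-series bound handles the case $\delta(\Gamma_0)=0$ in the same stroke, whereas the paper splits off that case and invokes the inequality $h(\widetilde M)\le\delta(\Gamma_0)$ separately.
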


\begin{proof}
If $\delta(\Gamma_0)>0$ when we may replace the condition $l(\gamma) \leq T$ 
with $T-c < l(\gamma) \leq T$ without affecting the exponentional growth rate and conclude the result.
On the other hand, if $\delta(\Gamma_0)=0$
then the simple inequality $h(\widetilde M) \leq \delta(\Gamma_0)$ gives $h(\widetilde M)=0$.
\end{proof}

A (countable) group $G$ is {\it amenable} if $\ell^\infty(G,\mathbb R)$ admits an invariant mean, 
i.e. that there exists
a bounded linear functional $\nu : \ell^\infty(G,\mathbb R) \to \mathbb R$ such that, for all
$f \in \ell^\infty(G,\mathbb R)$, 
\begin{enumerate}
\item
$\inf_{g \in G} f(g) \leq \nu(f) \leq \sup_{g \in G} f(g)$; and
\item
for all $g \in G$, $\nu(f_g)=\nu(f)$, where $f_g(h) = f(g^{-1}h)$.
\end{enumerate}
The concept was introduced by von Neumann in 1929. One sees immediately from this 
definition that finite groups are amenable by taking
\[
\nu(f) = \frac{1}{|G|} \sum_{g \in G} f(g).
\]

An alternative criterion for amenability was given by F\o{}lner \cite{Fol}:
$G$ is amenable if and only if, for every $\epsilon>0$ and every finite set $\{g_1\ldots,g_n\}
\subset G$, there exists a finite set $F \subset G$ such that
$\#(F \cap g_iF) \geq (1-\epsilon) \#F$, $i=1,\ldots,n$.
Using this criterion, it is easy to see that abelian groups are amenable and that, more 
generally, groups
with subexponential growth are amenable \cite{HT}. 
Furthermore, since amenability is closed under extensions, solvable groups are amenable. In 
particular, there are amenable groups with exponential growth (e.g. lamplighter groups). On the 
other hand, a group containing the free group on two generators is not amenable
and non-elementary Gromov hyperbolic groups (a class which includes the convex co-compact
groups above) are not amenable.

There are numerous results that connect growth and spectral properties of groups and manifolds to 
amenability. The prototype is the following theorem of Kesten from 1959. 
Let $G$ be a countable group 
and let $p : G \to \mathbb R^+$ be a symmetric probability distribution 
(i.e. $\sum_{g \in G} p(g) = 1$ and $p(g^{-1}) =p(g)$ for all $g \in G$) such that 
its support,
$\mathrm{supp}(p)$, generates $G$. This defines a symmetric random walk on $G$ with 
transition probabilities $P(g,g') = p(g^{-1}g')$. If we define $\lambda(G,P)$ to be the 
$\ell^2(G)$-spectral radius of $P$ then we have
\[
\lambda(G,P) = \limsup_{n \to \infty} P^n(g,g)^{1/n} 
= \lim_{n \to \infty} P^{2n}(g,g)^{1/2n},
\]
for any $g \in G$.
It is clear that $\lambda(G,P) \leq 1$.

\begin{theorem}[Kesten \cite{Kes}]\label{kesten}
We have $\lambda(G,P)=1$ if and only if $G$ is amenable.
\end{theorem}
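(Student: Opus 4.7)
\emph{Plan.} The argument splits into the two implications, both mediated by F\o{}lner's criterion. For the easy direction, assume $G$ amenable. Given $\epsilon > 0$, pick a finite $S \subset \mathrm{supp}(p)$ with $p(G \setminus S) < \epsilon$, and apply F\o{}lner to produce a finite $F \subset G$ that is $\epsilon$-almost invariant under translation by each $s \in S$. A direct computation with the unit test vector $f = |F|^{-1/2}\mathbf{1}_F \in \ell^2(G)$ gives
\[
\langle Pf, f\rangle \;=\; \frac{1}{|F|}\sum_{s \in G}p(s)\,|F \cap Fs^{-1}| \;\geq\; (1-\epsilon)\sum_{s \in S}p(s) \;\geq\; (1-\epsilon)^2.
\]
Since $P$ is bounded and self-adjoint on $\ell^2(G)$ (by symmetry of $p$), $\lambda(G,P) = \|P\| \geq \langle Pf, f\rangle$, and $\epsilon \downarrow 0$ yields $\lambda(G,P) = 1$.

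For the hard direction, assume $\lambda(G,P) = 1$. Self-adjointness identifies spectral radius with operator norm, so there exist unit vectors $f_n \in \ell^2(G)$ with $\langle Pf_n, f_n\rangle \to 1$. Non-negativity of the matrix entries of $P$ gives $\langle P|f_n|, |f_n|\rangle \geq \langle Pf_n, f_n\rangle$, so one can take $f_n \geq 0$. Writing $(\rho_s f)(g) = f(gs)$ for the right translation, the polarization identity (using $p(s^{-1}) = p(s)$)
\[
1 - \langle Pf_n, f_n\rangle \;=\; \tfrac{1}{2}\sum_{s \in G}p(s)\,\|f_n - \rho_s f_n\|_2^2
\]
forces $\|f_n - \rho_s f_n\|_2 \to 0$ for every $s \in \mathrm{supp}(p)$, hence for every $s$ in the subgroup they generate, which is all of $G$. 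Squaring, the pointwise identity $|a^2-b^2|=|a-b|(a+b)$ and Cauchy--Schwarz yield $\|\phi_n - \rho_s \phi_n\|_1 \leq 2\|f_n - \rho_s f_n\|_2 \to 0$ for the non-negative $\ell^1$-unit vectors $\phi_n := f_n^2$.

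To convert this $\ell^1$-almost-invariance into F\o{}lner sets, apply the layer cake decomposition $\phi_n = \int_0^\infty \mathbf{1}_{A_n(t)}\,dt$ with $A_n(t) = \{g : \phi_n(g) > t\}$, which gives
\[
\|\phi_n\|_1 = \int_0^\infty |A_n(t)|\,dt, \qquad \|\phi_n - \rho_s\phi_n\|_1 = \int_0^\infty |A_n(t)\triangle A_n(t)s^{-1}|\,dt.
\]
Given any finite $\{s_1,\ldots,s_k\} \subset G$ and any $\epsilon > 0$, a pigeonhole on the parameter $t$ produces, for $n$ sufficiently large, a level at which $F = A_n(t)$ satisfies $|F \triangle Fs_i^{-1}| < \epsilon|F|$ for each $i$, verifying F\o{}lner's criterion. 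This last extraction is the main obstacle: the Hilbert-space steps (polarization, replacement of $f_n$ by $|f_n|$) are short and standard once one organizes around the positive cone, but passing from approximate invariance of a non-negative $\ell^1$ function to a genuine combinatorial F\o{}lner set requires the coarea/layer-cake identity together with the pigeonhole step, and non-negativity of the matrix entries of $P$ is used crucially in the passage $f_n \mapsto |f_n|$.
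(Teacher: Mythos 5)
The paper does not prove this statement: it is quoted as Kesten's classical theorem and attributed to \cite{Kes}, so there is no internal proof to compare against. Your argument is a correct rendition of the standard modern proof (essentially the one found in Kesten's criterion treatments via F\o{}lner sets and the Namioka/layer-cake trick), and every displayed identity checks out: the computation $\langle Pf,f\rangle=\frac{1}{|F|}\sum_{s}p(s)\,|F\cap Fs^{-1}|$ for $f=|F|^{-1/2}\mathbf{1}_F$, the polarization identity $1-\langle Pf,f\rangle=\tfrac12\sum_s p(s)\|f-\rho_s f\|_2^2$ (using that $\rho_s$ is unitary and $f$ real), the passage $f_n\mapsto|f_n|$ via positivity of the kernel, the bound $\|\phi_n-\rho_s\phi_n\|_1\le 2\|f_n-\rho_s f_n\|_2$ for $\phi_n=f_n^2$, and the coarea identity $\|\phi_n-\rho_s\phi_n\|_1=\int_0^\infty|A_n(t)\triangle A_n(t)s^{-1}|\,dt$ followed by averaging over $t$. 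Three small points you should make explicit if you write this up in full: (i) the paper defines $\lambda(G,P)$ as the $\ell^2$-spectral radius, so the identification $\lambda(G,P)=\|P\|$ is immediate from self-adjointness (which is where the symmetry $p(s^{-1})=p(s)$ enters), and no appeal to the return-probability formula is needed; (ii) your F\o{}lner sets are almost invariant under \emph{right} translations $F\mapsto Fs^{-1}$, whereas the paper states F\o{}lner's criterion with left translations $g_iF$ -- these are equivalent (replace $F$ by $F^{-1}$, noting $\#(F^{-1}\cap gF^{-1})=\#(F\cap Fg^{-1})$), but the switch deserves a sentence; (iii) in extending $\|f_n-\rho_s f_n\|_2\to 0$ from $\mathrm{supp}(p)$ to all of $G$ you use that the support is symmetric (which follows from $p(s^{-1})=p(s)$) and generates $G$, together with the isometry property of $\rho$ and the triangle inequality on finite words -- this is uniform only over finite subsets of $G$, which is exactly what the F\o{}lner criterion requires.
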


Note that, while $\lambda(G,P)$ depends on the probability $p$, 
whether or not it takes the value $1$ 
depends only on the group $G$. 

Subsequently, results inspired by Kesten's Theorem were obtained in 
a variety of other situations. In the setting of group theory, 
the most notable result is Grigorchuk's co-growth criterion \cite{Gri} (see also
 Cohen \cite{Cohen}) for finitely generated groups.
 Recall that a finitely generated group $G$ may be written as $F/N$, where $F$ is a free 
 group of rank $k$ and $N$ is a normal subgroup. If $|\cdot|$ denotes the word length on $F$
 with respect to a free generating set
 then $\lim_{n \to \infty} (\#\{x \in F \hbox{ : } |x| =n\})^{1/n} = 2k-1$.
 
 \begin{theorem} [Grigorchuk \cite{Gri}]
 We have
 \[
 \limsup_{n \to \infty} (\#\{x \in N \hbox{ : } |x| =n\})^{1/n} = 2k-1
 \]
 if and only if $G$ is amenable.
 \end{theorem}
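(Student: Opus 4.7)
My plan is to reduce the statement to Kesten's Theorem 2.2 by studying the simple random walk on $G = F/N$.

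First I would introduce the symmetric probability measure $p$ on $F$ placing mass $1/(2k)$ on each of the $2k$ free generators and their inverses, and let $P$ be the induced Markov kernel on $G$. A direct count shows that $(2k)^n P^n(eN, eN)$ equals the number of length-$n$ sequences of generators (not necessarily reduced) whose product lies in $N$. Each such sequence reduces in $F$ to a unique $x \in N$ of length $|x| = m \leq n$ with $n - m$ even, and the number $c_{n,m}$ of length-$n$ sequences reducing to a fixed reduced word of length $m$ depends only on $n$ and $m$, because the Cayley graph of $F$ is a $2k$-regular tree and is therefore vertex-transitive in a way that makes the local structure along any length-$m$ geodesic identical. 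This yields the combinatorial identity
\[
(2k)^n P^n(eN, eN) = \sum_{m=0}^{n} f_m\, c_{n,m},
\]
where $f_m = \#\{x \in N : |x| = m\}$.

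The next step is to extract the relation between the spectral radius $\lambda(G,P)$ and the cogrowth $r = \limsup_{n \to \infty} f_n^{1/n}$. I would pass to generating functions $B(z) = \sum_n (2k)^n P^n(eN,eN)\, z^n$ and $C(z) = \sum_m f_m z^m$, whose radii of convergence are $1/(2k\lambda(G,P))$ and $1/r$ respectively, and combine the identity above with an explicit formula for $\sum_n c_{n,m} z^n$ (obtainable from the resolvent of the nearest-neighbour walk on a $2k$-regular tree, which has radius of convergence $1/(2\sqrt{2k-1})$) to derive the Grigorchuk--Cohen formula
\[
2k\, \lambda(G, P) =
\begin{cases}
r + \dfrac{2k-1}{r}, & r \geq \sqrt{2k-1}, \\
2\sqrt{2k-1}, & r \leq \sqrt{2k-1}.
\end{cases}
\]
Since $r \mapsto r + (2k-1)/r$ is strictly monotone from $2\sqrt{2k-1}$ to $2k$ on $[\sqrt{2k-1}, 2k-1]$, the condition $\lambda(G,P) = 1$ is equivalent to $r = 2k-1$. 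Kesten's Theorem 2.2 then identifies this in turn with amenability of $G$.

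The hard part will be the combinatorial analysis that produces the Grigorchuk--Cohen formula: one must pin down the generating function of $c_{n,m}$ and carefully compare the radii of convergence of $B(z)$ and $C(z)$ via the convolution identity, which amounts to solving a quadratic with discriminant governed by $\sqrt{2k-1}$. A further technical subtlety is that $r$ is defined as a $\limsup$ while $\lambda(G,P)$ is a genuine limit (along even $n$, since $p$ is symmetric), so one must argue at the level of generating functions rather than sequences. Finally the degenerate regime $r < \sqrt{2k-1}$ must be handled separately and shown to give $\lambda(G,P) = \sqrt{2k-1}/k < 1$, so the equivalence with amenability remains valid on both sides.
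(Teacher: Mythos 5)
The paper does not prove this statement at all: it is quoted as background and attributed to Grigorchuk \cite{Gri}, with Kesten's theorem stated just before it precisely because Grigorchuk's criterion is its group-theoretic analogue. Your outline — counting non-reduced words via the identity $(2k)^n P^n(eN,eN)=\sum_m f_m c_{n,m}$, deriving the Grigorchuk--Cohen formula $2k\lambda = r+(2k-1)/r$ for $r\geq\sqrt{2k-1}$ (with $\lambda=\sqrt{2k-1}/k$ in the degenerate regime) from the tree Green's function, and then invoking Kesten — is exactly the classical proof from the cited literature, and modulo the acknowledged technical work on the generating-function singularity analysis it is correct.
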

 
Subsequently, various extensions of this to graphs and (non-backtracking) random 
walks were obtained by Woess \cite{woess},
 Northshield \cite{north1, north2}
and Ortner and Woess \cite{ortnerwoess}. 

In the setting of Riemannian manifolds, an analogue is provided by 
the following spectral result of Brooks.
Let $M$ be a 
complete Riemannian manifold 
which is of ``finite topological type'', i.e. that it is topologically the union of finitely many simplices,
and let $\widetilde M$ be a regular covering of 
$M$ with covering group $G$. Let $\lambda_0(M)$ and
$\lambda_0(\widetilde M)$ denote the infimum of
the spectrum of the Laplace-Beltrami operator on $M$ and $\widetilde M$, respectively; 
then $\lambda_0(\widetilde M) \geq \lambda_0(M)$. Brooks showed that amenability of $G$
implied equality and that, together with an additional 
condition, the converse holds. More precisely, he proved the following.

\begin{theorem}[Brooks \cite{Brooks85}]\label{brooks}

\noindent
(i) If $G$ is amenable then
$\lambda_0(\widetilde M)=\lambda_0(M)$.

\noindent
(ii) Let $\phi$ be the lift of a $\lambda_0(M)$-harmonic function to $\widetilde M$
and let $F$ be a fundamental region for $M$ on $\widetilde M$. Suppose that there is a compact 
$K \subset F$ such that
\[
\inf_S \frac{\int_S \phi^2 \, d\mathrm{Area}}{\int_{\mathrm{int}(S)} \phi^2 \, d\mathrm{Vol}} >0,
\]
where the infimum is taken over co-dimension $1$ submanifolds 
$S$
that divide $F\setminus K$ into an 
interior and an exterior. If $\lambda_0(\widetilde M)=\lambda_0(M)$
then $G$ is amenable.
\end{theorem}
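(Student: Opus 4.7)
The plan is to combine the variational characterization
\[
\lambda_0(N) = \inf_{f \in C_c^\infty(N),\, f \not\equiv 0} \frac{\int_N |\nabla f|^2 \, d\mathrm{Vol}}{\int_N f^2 \, d\mathrm{Vol}}
\]
with the identity
\begin{equation}\label{eq:brooks-key}
\int_{\widetilde M} |\nabla(\chi\tilde\phi)|^2 \, d\mathrm{Vol} - \lambda_0(M) \int_{\widetilde M}(\chi\tilde\phi)^2 \, d\mathrm{Vol} = \int_{\widetilde M} \tilde\phi^2 |\nabla \chi|^2 \, d\mathrm{Vol},
\end{equation}
valid for any compactly supported Lipschitz $\chi$ on $\widetilde M$, where $\tilde\phi>0$ denotes the lift to $\widetilde M$ of a positive $\lambda_0(M)$-eigenfunction $\phi$ on $M$. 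Equation \eqref{eq:brooks-key} follows by expanding the square and integrating by parts using $\Delta \tilde\phi = \lambda_0(M)\tilde\phi$. In both directions of Theorem \ref{brooks} the game is to control the ratio $\int \tilde\phi^2 |\nabla\chi|^2 / \int \chi^2 \tilde\phi^2$.

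For (i), fix a bounded fundamental domain $F$ for the $G$-action on $\widetilde M$. Given a finite $A \subset G$, take a Lipschitz cutoff $\chi_A$ that is $1$ on the interior of $\bigcup_{g \in A} gF$, vanishes outside a fixed neighbourhood of that union, and satisfies a uniform bound on $|\nabla \chi_A|$. The right-hand side of \eqref{eq:brooks-key} is supported near the $G$-combinatorial boundary of $A$ and bounded above by a constant multiple of $(\#\partial A)\int_F \tilde\phi^2$, whereas the denominator $\int \chi_A^2 \tilde\phi^2$ is comparable to $(\#A)\int_F \tilde\phi^2$. If $G$ is amenable, let $A$ run through a F\o{}lner sequence; then $\#\partial A / \#A \to 0$, the Rayleigh quotient of $\chi_A \tilde\phi$ tends to $\lambda_0(M)$, and combined with the trivial inequality $\lambda_0(\widetilde M) \geq \lambda_0(M)$ this gives (i).

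For (ii), write an arbitrary test function as $\psi = \tilde\phi\, u$ with $u \in C_c^\infty(\widetilde M)$; the identity \eqref{eq:brooks-key} becomes
\[
\frac{\int_{\widetilde M} |\nabla \psi|^2}{\int_{\widetilde M} \psi^2} = \lambda_0(M) + \frac{\int_{\widetilde M} \tilde\phi^2 |\nabla u|^2}{\int_{\widetilde M} \tilde\phi^2 u^2},
\]
so it suffices to prove a Poincar\'e inequality for the weighted measure $\tilde\phi^2\, d\mathrm{Vol}$ on $\widetilde M$. By a weighted co-area argument this reduces to a Cheeger-type inequality
\[
\int_{\partial \Omega} \tilde\phi^2 \, d\mathrm{Area} \geq c \int_\Omega \tilde\phi^2 \, d\mathrm{Vol}
\]
uniform over smoothly bounded $\Omega \subset \widetilde M$. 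To produce such a $c>0$, decompose $\Omega$ along the tiling $\{gF\}_{g\in G}$: translates in which $\Omega$ occupies an intermediate share contribute via the standing hypothesis, which (after translation) is exactly the weighted isoperimetric inequality inside each $gF \setminus gK$; translates where $\Omega$ is essentially full define a set $A \subset G$, and since $G$ is non-amenable F\o{}lner's criterion yields $\#\partial A \geq \delta \#A$, so the interfaces between ``full'' and ``empty'' neighbouring translates contribute a large weighted surface integral.

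The main obstacle is part (ii): one must choose the full/empty/intermediate threshold uniformly in $\Omega$ and control the portions of $\partial \Omega$ sitting over translates of the compact piece $K$, where neither the internal isoperimetric hypothesis nor the F\o{}lner lower bound applies directly. This is exactly the role of the technical hypothesis on $\phi$ and $K$ in the statement, and arranging the two regimes to combine into a single constant $c$ independent of $\Omega$ is the delicate heart of the converse direction; part (i), by contrast, is a soft F\o{}lner averaging argument.
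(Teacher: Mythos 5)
The paper does not prove Theorem \ref{brooks}: it is quoted as background and attributed to Brooks \cite{Brooks85}, so your proposal can only be measured against Brooks' argument, whose broad strategy you have correctly identified (the ground-state transform identity, which you state correctly; F\o{}lner-set test functions for (i); a weighted isoperimetric/Cheeger constant for (ii)). Still, both halves have real gaps in the generality in which the theorem is stated, namely $M$ complete of finite topological type, not compact. In (i) you take a ``bounded fundamental domain'' $F$ and test functions $\chi_A\tilde\phi$ built from the lifted positive eigenfunction. If $M$ is non-compact no bounded fundamental domain exists, and $\int_F\tilde\phi^2=\int_M\phi^2$ can be infinite (e.g.\ an infinite-volume convex co-compact hyperbolic surface with $\delta(\Gamma)\le 1/2$, where $\lambda_0(M)=1/4$ has no square-integrable positive solution); then $\chi_A\tilde\phi$ is neither compactly supported nor in $L^2$, and your bounds ``numerator $\lesssim\#\partial A\int_F\tilde\phi^2$, denominator $\asymp\#A\int_F\tilde\phi^2$'' are vacuous. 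Your argument is fine for compact $M$ (it is then essentially Brooks' 1981 argument), and the standard repair in general is to drop $\tilde\phi$ in this direction and instead lift a compactly supported $f$ on $M$ with Rayleigh quotient at most $\lambda_0(M)+\epsilon$, so each translate contributes $\int_M f^2<\infty$, controlling the cutoff and cross terms by the F\o{}lner property; one also needs $F$ chosen, using finite topological type, so that only finitely many translates of $F$ meet $F$, which is what makes the combinatorial boundary $\partial A$ finite and the uniform gradient bound on $\chi_A$ available.

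In (ii), reducing via the ground-state transform to a positive lower bound for the weighted Cheeger constant of $(\widetilde M,\tilde\phi^2\,d\mathrm{Vol})$ and then invoking the weighted Cheeger inequality is the right skeleton, but your text stops exactly where the hypothesis of the theorem has to do its work. The assertion that interfaces between ``full'' and ``empty'' adjacent translates force a definite amount of weighted boundary area is not automatic: it requires a relative weighted isoperimetric inequality across adjacent copies of $F$, uniform in $\Omega$, even though $F$ is non-compact and $\tilde\phi^2$ may concentrate mass in the ends; the displayed hypothesis only controls hypersurfaces $S$ cutting off an ``interior'' in $F\setminus K$, so the intermediate-occupancy regime near $K$ needs a separate compactness argument (positive lower bound of $\phi$ on $K$ plus a standard relative isoperimetric inequality there), and the threshold between regimes must be chosen independently of $\Omega$. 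You name these issues yourself as ``the delicate heart,'' but naming them does not discharge them: converting the hypothesis into a single constant $c>0$ valid for all $\Omega$, and from its failure extracting F\o{}lner sets in $G$ (equivalently, contradicting $\#\partial A\ge c_0\,\#A$), is the substance of Brooks' proof. As it stands, part (ii) is a plausible plan rather than a proof, and part (i) proves the statement only under the additional assumption that $\int_M\phi^2<\infty$ (in particular for compact $M$).
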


(See also Brooks \cite{Brooks81} for the case when $M$ is compact and $G$ is its
fundamental group and Burger
\cite{Burger} for a shorter proof.)
The Cheeger-type condition in part (ii) holds if, for example, $M$ is a convex 
co-compact quotient of the $(d+1)$-dimensional
real hyperbolic space $\mathbb H^{d+1}$ and $\lambda_0(M) < d^2/4$.

The problem of critical exponents was also first considered in the case $X = \mathbb H^{d+1}$.
In the early 1980s, Rees \cite{Rees}
showed that if $\Gamma/\Gamma_0$ is abelian then we have equality 
(and her dynamical methods generalize to variable curvature). 
Soon afterwards, Brooks obtained a more general result as a corollary of Theorem \ref{brooks}.
This is due to the fact that, for $X= \mathbb H^{d+1}$,
$\delta(\Gamma)$ and $\lambda_0(M)$ are related by the formula 
$\lambda_0(M) = \delta(\Gamma)(d-\delta(\Gamma))$, provided $\delta(\Gamma) > d/2$, with the same holding for $\delta(\Gamma_0)$ and $\lambda_0(\widetilde M)$.
In particular, if $\delta(\Gamma)>d/2$ then $\lambda_0(M) < d^2/4$
and so Theorem \ref{brooks}
implies the statement that $\delta(\Gamma_0) = \delta(\Gamma)$ if and only if $G$ is amenable.

More recently, Stadlbauer \cite{Stad} and Jaerisch \cite{Jaerisch} have considered
the relation between amenability and certain growth rates that occur in the study of skew product 
extensions of dynamical systems. It will be clear below that we are greatly indebted to this 
work in our analysis.

\section{Countable State Markov Shifts and Gurevi\v{c} Pressure}

In this section we will define countable state Markov shifts and discuss some of their properties.
Basic definitions and results are taken from chapter 7 of \cite{kitchens}.
In the rest of the paper, we shall be concerned with finite state shifts and
skew product extensions of these by
a countable group, so we shall often specialise to these two cases.

Let $S$ be a countable set, called the {\it alphabet}, and let $A$ be a matrix, 
called the transition matrix, indexed by 
$S \times S$ with entries zero
or one. We then define the space
\[
\Sigma_A^+ = \left\{x = (x_n)_{n=0}^\infty \in S^{\mathbb Z^+} \hbox{ : }
A(x_n,x_{n+1})=1 \ \forall n \in \mathbb Z^+\right\},
\]
with the product topology induced by the discrete topology on $S$. This topology is compatible
with the metric $d(x,y)= 2^{-n(x,y)}$, where
\[
n(x,y) = \inf\{n \hbox{ : } x_n \neq y_n\},
\]
with $n(x,y)=\infty$ if $x=y$. 
If $S$ is finite then $\Sigma_A^+$ is compact.
We say that $A$ is {\it locally finite} if all its row and column sums are finite.
Then $\Sigma_A^+$ is locally compact if and only if $A$ is locally finite.
(The skew product extensions we consider have this latter property.)

We define the (one-sided) countable state topological Markov shift
$\sigma : \Sigma_A^+ \to \Sigma_A^+$ by
$(\sigma x)_n = x_{n+1}$.  This is a continuous map.
We will say that $\sigma$ is {\it topologically
transitive} if it has a dense orbit and {\it topologically mixing} if, given non-empty 
open sets $U,V \subset \Sigma_A^+$, there exists
$N \geq 0$ such that $\sigma^{-n}(U) \cap V \neq \varnothing$ for all $n \geq N$. 
We say that the matrix $A$ is {\it irreducible} if, for each $(i,j) \in S \times S$, there exists
$n =n(i,j)\geq 1$ such that $A^n(i,j)>0$.
For $A$ irreducible, set $p\geq 1$ to be the greatest common divisor of periods of periodic orbits
$\sigma : \Sigma_A^+ \to \Sigma_A^+$; this $p$ is called the period of $A$.
We say that $A$ is {\it aperiodic} if $p=1$ or, equivalently,
if there exists $n \geq 1$ such that $A^n$ has all entries positive.
Suppose that $A$ is locally finite. Then $\sigma : \Sigma_A^+ \to \Sigma_A^+$ is topologically
transitive if and only if $A$ is irreducible and $\sigma : \Sigma_A^+ \to \Sigma_A^+$ is topologically
mixing if and only if $A$ is aperiodic.

Suppose that $A$ is irreducible but not aperiodic and fix $i \in S$. Then we may partition
$S$ into sets $S_l$, $l=0,\ldots,p-1$, defined by
\[
S_l = \{j \hbox{ : } A^{np+l}(i,j)>0 \mbox{ for some } n \geq 1\}.
\]
(This partition is independent of the choice of $i$.)
For each $l$, let $A_l$ denote the restriction of $A$ to $S_l \times S_l$; then 
$\sigma : \Sigma_{A_l}^+ \to \Sigma_{A_{l+1}}^+$ (mod $p$) and $A_l^p$
is aperiodic.

We say that an $n$-tuple
$w = (w_0,\ldots,w_{n-1}) \in S^n$ is an {\it allowed word} of length $n$ if
$A(w_j,w_{j+1})=1$ for $j =0,\ldots,n-2$. 
We will write $\mathcal W^n$ for the set of allowed words of length $n$.
If $w \in \mathcal W^n$ then we define 
the associated cylinder set $[w]$ by 
\[
[w] = \{x \in \Sigma_A^+ \hbox{ : } x_j =w_j, \ j=0,\ldots,n-1\}.
\]

For a function $f : \Sigma_A^+ \to \mathbb R$, set
\[
V_n(f) = \sup\{|f(x)-f(y)| \hbox{ : } x_j=y_j, \ j=0,\ldots,n-1\}.
\]
We say that $f$ is {\it locally H\"older continuous} if there exist $0<\theta<1$ and $C \geq 0$ such that, for all $n \geq 1$,
$V_n(f) \leq C\theta^n$.
(There is no requirement of $V_0(f)$ and a locally H\"older $f$ may be unbounded.)
For $n \geq 1$, we write
\[
f^n := f + f \circ \sigma + \cdots + f \circ \sigma^{n-1}.
\]

\begin{definition} Suppose that $\sigma : \Sigma_A^+ \to \Sigma_A^+$ is topologically
transitive and let 
$f : \Sigma_A^+ \to \mathbb R$ be a locally H\"older continuous function. Following 
Sarig \cite{sarig-etds},
we define the
{\it Gurevi\v{c} pressure}, $P_G(\sigma,f)$, of $f$ to be
\[
P_G(\sigma,f) = \limsup_{n \to \infty} \frac{1}{n} \log 
\sum_{\substack{\sigma^n x=x \\ x_0=a}}
e^{f^n(x)},
\]
where $a \in S$. (The definition is independent of the choice of $a$.)
\end{definition}

\begin{remark}
In \cite{sarig-etds}, Sarig gives this definition in the case where
 $\sigma : \Sigma_A^+ \to \Sigma_A^+$ is topologically mixing.
 However, the above decomposition of $\Sigma_A^+ = \Sigma_{A_0}^+ \cup \cdots
 \cup \Sigma_{A_{p-1}}^+$, with $\sigma^p$ topologically mixing on each component,
 together with the regularity of the function $f$,
 shows that the same definition may be made in the topologically transitive case.
 \end{remark}
 
We now specialise to the case where $S$ is finite. In this situation, we call 
$\sigma : \Sigma_A^+ \to \Sigma_A^+$
a (one-sided) subshift of finite type. The above definitions and results hold. If $f : \Sigma_A^+
\to \mathbb R$ is H\"older continuous then $f$ is locally H\"older. Provided
$\sigma : \Sigma_A^+ \to \Sigma_A^+$ is topologically transitive, the Gurevi\v{c} pressure
$P_G(\sigma,f)$ agrees with the standard pressure $P(\sigma,f)$, defined by
\[
P(\sigma,f) = \limsup_{n \to \infty} \frac{1}{n} \log 
\sum_{\sigma^n x=x}
e^{f^n(x)}
\]
and if $\sigma$ is topologically mixing then the $\limsup$ may be replaced 
with a limit.

We now consider skew product extensions of a shift of finite type 
$\sigma : \Sigma_A^+ \to 
\Sigma_A^+$, which we will assume to be
topologically mixing. Let $G$ be a countable group and let 
$\psi : \Sigma_A^+ \to G$ be a function
depending only on two co-ordinates, $\psi(x)=\psi(x_0,x_1)$. 
(One may consider more 
general $\psi$ but this set-up suffices for our needs.) This data defines a 
{\it skew product
extension} $\widetilde \sigma : \Sigma_A^+ \times G \to \Sigma_A^+ \times G$ by
$\widetilde \sigma(x,g) = (\sigma x,g\psi(x))$. For $n \geq 1$ define
$\psi_n$ by
\[
\psi_n(x) =   \psi(x) \psi(\sigma x)\cdots \psi(\sigma^{n-1}x);
\]
then $\widetilde \sigma^n(x,g) = (x,g)$ if and only if
$\sigma^n x=x$ and $\psi_n(x)=1_G$, where $1_G$ is the identity element
in $G$.

The map $\widetilde \sigma : \Sigma_A^+ \times G \to \Sigma_A^+ \times G$ 
is itself a countable state Markov shift
with alphabet $S \times G$ and transition matrix $\widetilde A$ defined by
$\widetilde A((i,g),(j,h)) = 1$ if $A(i,j)=1$ and $\psi(i,j) = g^{-1}h$ and 
$\widetilde A((i,g),(j,h)) = 0$
otherwise.
Clearly, $\widetilde A$ is locally finite and so the topological transitivity and 
topological 
mixing of $\widetilde \sigma$ are equivalent to $\widetilde A$ being irreducible and 
aperiodic,
respectively.

Let $f : \Sigma_A^+ \to \mathbb R$ be H\"older continuous and define 
$\widetilde f : \Sigma_A^+ \times G \to \mathbb R$ by $\widetilde f(x,g) = f(x)$;
then $\widetilde f$ is locally H\"older continuous and its Gurevi\v{c} pressure 
$P_G(\widetilde \sigma,\widetilde f)$ is defined. In fact,
it is easy to see that, due to the mixing of $\sigma$,
\[
P_G(\widetilde \sigma,\widetilde f) = \limsup_{n \to \infty} \frac{1}{n} \log 
\sum_{\substack{\sigma^n x=x \\ \psi_n(x)=1_G}}
e^{f^n(x)}.
\]
It is clear that $P_G(\widetilde \sigma,\widetilde f) \leq P(\sigma,f)$ and it is interesting to ask
when equality holds. Stadlbauer has shown this depends only on the amenability of
the group $G$,
provided the skew product and
the function $f$ satisfy appropriate symmetry conditions, which we now describe.

Suppose there is a fixed point free involution $\kappa : S \to S$ such that
$A(\kappa j,\kappa i) = A(i,j)$, for all $i,j \in S$. We say that 
the skew product $\widetilde \sigma : \Sigma_A^+ \times G \to \Sigma_A^+ \times G$ is
{\it symmetric} (with respect to $\kappa$) if $\psi(\kappa j,\kappa i) = \psi(i,j)^{-1}$.
A function $f : \Sigma_A^+ \to \mathbb R$ is called {\it weakly symmetric} if, for all
$n \geq 1$ and and all length $n$ cylinders $[z_0,z_1,\ldots,z_{n-1}]$, there exists
$D_n >0$ such that $\lim_{n \to \infty} D_n^{1/n}=1$ and
\[
\sup_{\substack{x \in [z_0,\ldots,z_{n-1}] \\ y \in [\kappa z_{n-1},\ldots,\kappa z_{0}]}}
\exp(f^n(x)-f^n(y)) \leq D_n.
\]

The following is the main result of Stadlbauer \cite{Stad},
restricted to the case where the base is a (finite state) subshift of finite
 type. We
 will use this in subsequent 
arguments.
(More generally, Stadlbauer considers skew product expansions of countable state
 Markov shifts.)
 
 \begin{proposition} [Stadlbauer \cite{Stad}, Theorem 4.1 and Theorem 5.6] \label{stadlbauer}
 Let $\widetilde \sigma : \Sigma_A^+ \times G \to \Sigma_A^+ \times G$ be a transitive
 symmetric skew-product extension of a mixing subshift of finite type 
 $\sigma : \Sigma_A^+ \to \Sigma_A^+$ 
 by a countable group $G$. Let $f : \Sigma_A^+ \to \mathbb R$
 be a weakly symmetric H\"older continuous function and define 
 $\widetilde f : \Sigma_A^+ \times G \to \mathbb R$
 by $\widetilde f(x,g) = f(x)$. Then $P_G(\widetilde \sigma,\widetilde f) = P(\sigma,f)$ 
 if and only if $G$ is amenable.
 \end{proposition}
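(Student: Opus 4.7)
The plan is to prove the two implications of the ``iff'' separately, in direct analogy with Kesten's theorem (Theorem \ref{kesten}). I would interpret the Gurevi\v{c} pressure $P_G(\widetilde\sigma,\widetilde f)$ as the logarithm of the spectral radius of a transfer operator $\widetilde{\mathcal L}_{\widetilde f}$ acting on a suitable function space over $\Sigma_A^+\times G$, and $e^{P(\sigma,f)}$ as the spectral radius of the analogous operator $\mathcal L_f$ on the base. The inequality $P_G(\widetilde\sigma,\widetilde f)\leq P(\sigma,f)$ is automatic from the definitions, so the task is to decide when equality holds.

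For the direction ``$G$ amenable implies equality'', I would use a F\o{}lner sequence $\{F_N\}\subset G$ to construct trial functions supported on $\Sigma_A^+\times F_N$. The restriction of $\widetilde{\mathcal L}_{\widetilde f}$ to such a subspace differs from the full operator only by terms that push mass across a $\psi$-neighbourhood of $\partial F_N$, and the F\o{}lner property makes these boundary contributions asymptotically negligible compared with the bulk. A Rayleigh quotient built from a Perron--Frobenius eigenfunction of $\mathcal L_f$ spread uniformly over $F_N$ then yields a lower bound on the spectral radius of $\widetilde{\mathcal L}_{\widetilde f}$ which tends to $e^{P(\sigma,f)}$ as $N\to\infty$, giving $P_G(\widetilde\sigma,\widetilde f)\geq P(\sigma,f)$.

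For the converse, I would reduce to Kesten's theorem via an induced (first-return) construction. Fix a reference state $a\in S$. The first return map of $\sigma$ to the cylinder $[a]$ is modelled by a full shift on the countable alphabet of first-return loops at $a$, with induced potential $f^\tau$ and induced $G$-cocycle $\psi^\tau$, where $\tau$ is the first return time. After normalising $f$ by subtracting $P(\sigma,f)$ and weighting by a conformal eigenfunction, the quantities $p(g,h)=\sum_{w\,:\,\psi^\tau(w)=g^{-1}h} e^{f^\tau(w)}$ should assemble into a $G$-equivariant Markov kernel on $G$, which the symmetry of $\psi$ under $\kappa$ together with the weak symmetry of $f$ forces to be symmetric, hence the transition kernel of a symmetric random walk driven by some probability $\mu$ on $G$. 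Identifying its $\ell^2(G)$-spectral radius $\lambda(G,\mu)$ with $\exp(P_G(\widetilde\sigma,\widetilde f)-P(\sigma,f))$, the pressure equality becomes $\lambda(G,\mu)=1$, which by Theorem \ref{kesten} is equivalent to amenability of $G$.

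The main obstacle is precisely this last identification, together with the passage from the merely weak symmetry of $f$ to a genuinely symmetric random walk. The weak symmetry hypothesis controls $f^n$-sums only up to a subexponential factor $D_n$, so the induced weights are symmetric only up to $D_{\tau(w)}$; one must show that these subexponential errors do not affect the exponential growth rate and hence the spectral radius. A secondary difficulty is that the induced system has a countably infinite alphabet (since return times are unbounded), so the Sarig-style Gurevi\v{c} pressure machinery from Section 3, rather than classical Perron--Frobenius theory, is needed to legitimise the transfer operator--spectral radius correspondence on the induced skew product.
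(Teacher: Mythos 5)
A preliminary remark: the paper does not prove Proposition \ref{stadlbauer} at all --- it is imported wholesale from Stadlbauer \cite{Stad} (Theorems 4.1 and 5.6), the only added observation being the reduction from $\psi$ depending on one coordinate to two coordinates by recoding on $\mathcal W^2$. So your attempt has to be measured against Stadlbauer's argument rather than anything internal to this paper. Your first direction (F\o{}lner sets, trial functions supported on $\Sigma_A^+\times F_N$, boundary terms negligible) is in the spirit of Stadlbauer's proof of the amenable implication and of Roblin's argument, and with the partition-function (rather than Rayleigh-quotient) formulation it can be made to work; note, though, that even the identification of $P_G(\widetilde\sigma,\widetilde f)$ with the log of a spectral radius of a transfer operator on the countable-state system is itself a theorem (this is essentially Jaerisch's contribution, as the paper's closing remark indicates), so it is cleaner to argue directly with the sums $\sum_{\sigma^nx=x,\,\psi_n(x)=1_G}e^{f^n(x)}$.

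The genuine gap is in your converse. You induce on a cylinder $[a]$, form $p(g,h)=\sum_{w:\psi^\tau(w)=g^{-1}h}e^{f^\tau(w)}$ (after conformal normalisation) and claim $\lambda(G,\mu)=\exp\bigl(P_G(\widetilde\sigma,\widetilde f)-P(\sigma,f)\bigr)$, so that Kesten's Theorem \ref{kesten} finishes the proof. This identification cannot be taken as stated, because the two sides run on different clocks: $\lambda(G,\mu)$ is an exponential rate \emph{per return to $[a]$}, while the Gurevi\v{c} pressure is an exponential rate \emph{per symbol of the original shift}. A periodic word of length $n$ through $(a,1_G)$ decomposes into $k$ return loops with $1\le k\le n$, and the bound $\mu^{*k}(1_G)\le\lambda(G,\mu)^k$ only controls the contributions with $k$ comparable to $n$; words consisting of few long loops are not controlled by $\lambda(G,\mu)$ at all. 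To deduce $P_G(\widetilde\sigma,\widetilde f)<P(\sigma,f)$ from $\lambda(G,\mu)<1$ you need, in addition, exponential tails (a large-deviations estimate) for the return time under the normalised weights, and even then you get inequalities rather than the clean identity your argument rests on. Two further points in the same step need repair: the induced increments are not i.i.d.\ ($f^\tau$ is not a function of the return word alone, so $p(g,h)$ is only defined up to bounded-distortion constants), and the kernel you build is symmetric only up to the factors $D_m$ of weak symmetry applied loop by loop; since these constants multiply over the $k$ blocks of a concatenation, a per-cylinder subexponential bound does not automatically survive, whereas Kesten's theorem demands an honest symmetric probability measure. Stadlbauer's actual proof of the hard direction avoids manufacturing an i.i.d.\ walk altogether: he works with the dynamical partition functions and an operator form of Kesten's criterion (almost invariant vectors for the regular representation on $\ell^2(G)$), and the weak symmetry is invoked only to compare an $n$-cylinder with its $\kappa$-reversal, where the single factor $D_n$ is harmless. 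If you want to salvage your reduction, you would need to add the tail estimate for return times, a symmetrisation step with control of its effect on exponential rates, and an Abramov-type argument relating the induced and original pressures for the (possibly transient) skew product --- each of these is a substantive lemma, not a formality.
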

 
 \begin{remark}
 In \cite{Stad}, Stadlbauer considers skew products with $\psi$ depending on only one co-ordinate. However, replacing $S$ by $\mathcal W^2$, one can easily recover the above formulation.
 \end{remark}

We end this section by discussing two-sided subshifts of finite type
and suspended flows over them. Given a finite alphabet $S$ and transition matrix
$A$, we define
\[
\Sigma_A = \left\{x = (x_n)_{n=0}^\infty \in S^{\mathbb Z} \hbox{ : }
A(x_n,x_{n+1})=1 \ \forall n \in \mathbb Z\right\}
\]
and the (two-sided) shift of finite type 
$\sigma : \Sigma_A \to \Sigma_A$ by
$(\sigma x)_n = x_{n+1}$. As before, we give $\Sigma_A$
with the product topology induced by the discrete topology on $S$ and this is compatible
with the metric $d(x,y)= 2^{-n(x,y)}$, where
\[
n(x,y) = \inf\{|n| \hbox{ : } x_n \neq y_n\},
\]
with $n(x,y)=\infty$ if $x=y$. 
Then $\Sigma_A$ is compact and $\sigma$ is a homeomorphism.
There is an obvious one-to-one correspondence between the periodic points of 
$\sigma : \Sigma_A \to \Sigma_A$ and $\sigma : \Sigma_A^+ \to \Sigma_A^+$.
Furthermore, we may pass from H\"older functions on $\Sigma_A$ to H\"older
functions on $\Sigma_A^+$ in such a way that sums around periodic orbits are preserved.
More precisely, we have the following lemma, due originally to Sinai.

\begin{lemma} [\cite{PP}] \label{sinai}
Let $f : \Sigma_A \to \mathbb R$ be H\"older continuous. Then
there is a H\"older continuous function $f' : \Sigma_A^+ \to \mathbb R$ 
(with a smaller H\"older exponent) such that $f^n(x) =  (f')^n(x)$, whenever 
$\sigma^n x=x$.
\end{lemma}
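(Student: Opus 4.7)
The plan is to find a H\"older continuous function $u : \Sigma_A \to \mathbb R$ and a function $g : \Sigma_A \to \mathbb R$ depending only on the forward coordinates of $x$ such that $f = g + u \circ \sigma - u$. Then $g$ descends along the natural projection $\pi : \Sigma_A \to \Sigma_A^+$ to a function $f' : \Sigma_A^+ \to \mathbb R$, and for any $x$ with $\sigma^n x = x$ the coboundary telescopes:
\[
f^n(x) = g^n(x) + u(\sigma^n x) - u(x) = g^n(x) = (f')^n(\pi x).
\]
Under the bijection between periodic points of $\sigma$ on $\Sigma_A$ and on $\Sigma_A^+$ noted just above, this is the required identity.

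The construction I have in mind is the standard one: first fix, for each symbol $a \in S$, an admissible one-sided ``canonical past'' $(\ldots, q_{-2}(a), q_{-1}(a))$ ending with $a$, and define the past-standardisation map $\pi_* : \Sigma_A \to \Sigma_A$ by
\[
(\pi_* x)_n = \begin{cases} x_n, & n \geq 0, \\ q_n(x_0), & n < 0. \end{cases}
\]
Then $\pi_* x$ depends only on the forward coordinates of $x$, and $\pi_* x$ agrees with $x$ in all non-negative positions. Now set
\[
u(x) = \sum_{k=0}^\infty \bigl[f(\sigma^k \pi_* x) - f(\sigma^k x)\bigr].
\]
Since $\sigma^k x$ and $\sigma^k \pi_* x$ agree in every position $n \geq -k$, their distance is at most $2^{-(k+1)}$, so H\"older continuity of $f$ with exponent $\alpha$ gives geometric decay of the summands and uniform absolute convergence of the series. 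Defining $g(x) := f(x) + u(\sigma x) - u(x)$, a short reindexing of the series for $u(\sigma x)$ rewrites $g$ entirely in terms of $\pi_* x$, $\pi_*(\sigma x)$, and their iterates under $\sigma$; each of these depends only on the forward coordinates of $x$, so $g$ does too, producing the desired $f'$.

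The main technical point, and where the stated loss of H\"older exponent arises, is showing that $u$ is H\"older continuous. For $x, y$ agreeing in coordinates $|n| < N$, I would split the series for $u(x) - u(y)$ at $k \approx N/2$. For small $k$ ($k \leq N/2$), I would bound $|f(\sigma^k \pi_* x) - f(\sigma^k \pi_* y)|$ and $|f(\sigma^k x) - f(\sigma^k y)|$ directly using H\"older continuity of $f$: the shifted pairs still agree in a window of width $\gtrsim N - k \geq N/2$, so each contribution is of order $2^{-\alpha(N-k)}$, and these sum to $O(2^{-\alpha N/2})$. For large $k$ ($k > N/2$), I would instead use the bound $|f(\sigma^k \pi_* x) - f(\sigma^k x)| \leq C 2^{-\alpha(k+1)}$ established above (and its analogue for $y$), giving a geometric tail of size $O(2^{-\alpha N/2})$. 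Combining the two halves yields $|u(x) - u(y)| = O(d(x,y)^{\alpha/2})$, so $u$, and hence $g$ and $f'$, is H\"older continuous with exponent $\alpha/2$. This is the ``smaller H\"older exponent'' in the statement, and once it is secured the telescoping identity displayed above finishes the proof.
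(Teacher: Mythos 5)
Your proof is correct and is essentially the argument the paper relies on: Lemma \ref{sinai} is quoted from \cite{PP} without proof, and the standard proof there is exactly your Sinai coboundary construction (choose a canonical past for each symbol, set $u(x)=\sum_{k\ge 0}[f(\sigma^k\pi_*x)-f(\sigma^k x)]$, telescope over periodic orbits, and split the series to get the exponent $\alpha/2$). The only point to tighten is the claim that a ``short reindexing'' rewrites $g=f+u\circ\sigma-u$ purely in terms of $\pi_*x$, $\pi_*\sigma x$ and their iterates --- the individual series you would need to separate diverge; instead, compare $g(x)$ and $g(y)$ for $x,y$ sharing all forward coordinates and note that the partial sums differ by $f(\sigma^{N+1}x)-f(\sigma^{N+1}y)\to 0$, which yields the same conclusion that $g$ descends to $\Sigma_A^+$.
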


We may also define suspended flows over $\sigma : \Sigma_A \to \Sigma_A$.
Given a strictly positive continuous function $r : \Sigma_A \to \mathbb R^+$, we define the $r$-suspension space 
\[
\Sigma_A^r = \{(x,s) \hbox{ : } x \in \Sigma_A, \ 0 \leq s \leq r(x)\}/\sim,
\]
where $(x,r(x)) \sim (\sigma x,0)$. The suspended flow $\sigma^r_t : X_A^r \to X_A^r$ is defined by
$\sigma^r_t(x,s) = (x,s+t)$ modulo the identifications. Clearly, there is a natural one-to-one
correspondence between periodic orbits for $\sigma^r_t : \Sigma_A^r \to \Sigma_A^r$ and periodic orbits for
$\sigma :  \Sigma_A \to \Sigma_A$, and a $\sigma^r$-periodic orbit is 
prime if and only if the corresponding $\sigma$-periodic orbit is prime.
Furthermore, if $\gamma$ is a closed $\sigma^r$-orbit corresponding to the
closed $\sigma$-orbit $\{x,\sigma x, \ldots, \sigma^{n-1}x\}$ then
the period of $\gamma$ is equal to $r^n(x)$.

\section{Coverings and Geodesic Flows}

As in the introduction, we shall write $M = X/\Gamma$, $\widetilde M = X/\Gamma_0$ and 
$G = \Gamma/\Gamma_0$. 
There is a natural dynamical system related to the geometry of $M$, namely
the geodesic flow on the unit-tangent bundle
$SM = \{(x,v) \in TM \hbox{ : } \|v\|_x =1\}$,
where $\|\cdot\|_x$ is the norm induced by the Riemannian structure
on $T_xM$. 
For future reference, we write $p : SM \to M$ for the projection.
The geodesic flow $\phi_t : SM \to SM$ is defined as follows.
Given $(x,v) \in SM$, there is a unique unit-speed geodesic
$\gamma : \mathbb R \to M$ with $\gamma(0)=x$ and
$\dot \gamma (0) =v$. We then define
$\phi_t(x,v) = (\gamma(t),\dot \gamma(t))$.

The non-wandering set $\Omega(\phi) \subset SM$ is defined to be the set of points 
$x \in SM$ with the property that for every open neighbourhood $U$ of $x$, there exists
$t >0$ such that $\phi_t(U) \cap U \neq \varnothing$. It can be characterised as the set of vectors
tangent to $C(\Gamma)/\Gamma \subset M$.

The restriction of the geodesic flow to its non-wandering set,
$\phi_t : \Omega(\phi) \to \Omega(\phi)$, is an example of a {\it hyperbolic flow}. 
A $C^1$ flow $\phi_t : \Omega \to \Omega$ is hyperbolic if
\begin{enumerate}
\item
 there is a continuous $D\phi$-invariant
splitting of the tangent bundle 
\begin{align*}
T_{\Omega}(SM) = E^0 \oplus E^s \oplus E^u,
\end{align*}
where $E^0$ is the line bundle tangent to the flow and where
there exists constants $C, c>0$ such that
\begin{itemize}
\item[(i)]
$\|D\phi_t v\| \leq Ce^{-ct} \|v\|$, for all $v \in E^s$ and $t >0$;
\item[(ii)]
$\|D\phi_{-t} v\| \leq Ce^{-ct} \|v\|$, for all $v \in E^u$ and $t >0$,
\end{itemize}
\item
$\phi_t : \Omega \to \Omega$ is transitive (i.e. it has a dense orbit),
\item
the periodic $\phi$-orbits are dense in $\Omega$, and
\item
there is an open set $U \supset \Omega$ such that $\Omega = \bigcap_{t \in \mathbb R} 
\phi_t(U)$.
\end{enumerate}

The manifold $\widetilde M$ is a regular $G$-cover of $M$ and we 
let $\pi : \widetilde M \to M$ denote the covering map.
The geodesic flow $\widetilde \phi_t : S\widetilde M \to S\widetilde M$ is defined in a 
similar way to the geodesic flow on $SM$.
We write $\widetilde p : S\widetilde M \to M$ for the projection.
The action of $G$ extends to the unit-tangent bundle $S\widetilde M$ 
by the formula $g(x,v)=(gx,Dg_xv)$ and induces a regular covering
$\pi : S\widetilde M \to SM$ which commutes with the two flows.
(The use of $\pi$ to denote both coverings should not cause any confusion.)

There is a natural one-to-one correspondence between (prime) periodic orbits for $\phi_t : \Omega(\phi)
\to \Omega(\phi)$
and (prime) closed geodesics on $M$, with the least period being
equal to the length of the closed geodesic. We will typically write $\gamma$ for either a closed geodesic or a periodic orbit and allow the context to distinguish them. We will write $l(\gamma)$
for the length (period) of $\gamma$. The number $h =h(M)$ defined in the introduction as the
exponential growth rate of the number of $\gamma$ with $l(\gamma) \leq T$ is also equal to the
topological entropy of $\phi$.

\section{Markov Sections and Symbolic Dynamics} 
A particularly useful aspect of hyperbolic flows is that they admit a description by finite state
symbolic dynamics. We shall outline this construction below.

Given $\epsilon>0$, we define the (strong) {\it local stable manifold} $W^{s}_\epsilon(x)$ and
(strong) {\it local unstable manifold} $W^{u}_\epsilon(x)$ for a point $x \in SM$ by
\[
W^{s}_\epsilon(x) = \left\{y \in SM \hbox{ : } d(\phi_t(x),\phi_t(y)) 
\leq \epsilon \ \forall t \geq 0 \hbox{ and } \lim_{t \to \infty} d(\phi_t(x),\phi_t(y)) =0\right\}
\]
and
\[
W^{u}_\epsilon(x) = \left\{y \in SM \hbox{ : } d(\phi_{-t}(x),\phi_{-t}(y)) \leq \epsilon \ \forall t \geq 0
\hbox{ and } \lim_{t \to \infty} d(\phi_{-t}(x),\phi_{-t}(y)) =0\right\}.
\]
Provided $\epsilon>0$ is sufficiently small, these sets are diffeomorphic to 
$(\dim M -1)$-dimensional
embedded disks. If $x$ and $y$ are sufficiently close then there is a unique 
$t \in [-\epsilon,\epsilon]$
such that $W^s_\epsilon(x) \cap W^u_\epsilon(\phi_t(y)) \neq \varnothing$ and, furthermore, this
intersection consists of a single point denoted $[x,y]$. This pairing $[\cdot,\cdot]$ is called the 
{\it local product structure}.

Let
$D_1,\ldots,D_k$ be a family of co-dimension one disks that form a local cross section to 
the flow and
let $\mathcal P$ denote the Poincar\'e map between them.
For each $i=1,\ldots,k$,
let $T_i \subset \mathrm{int}(D_i) \cap \Omega(\phi)$ be sets which are chosen to be 
{\it rectangles} in the sense that whenever $x,y \in T_i$ then $[x,y]\in T_i$ and {\it proper}
(i.e. $T_i = \overline{\mathrm{int}(T_i)}$ for each $i$). 
(Here and subsequently, the interiors are taken 
relative to $D_i$.)
We then say that $T_1,\ldots,T_k$
are {\it Markov sections} for the flow if
\begin{enumerate}
\item
for $x \in \mathrm{int}(T_i)$ with $\mathcal P \in \mathrm{int}(T_j)$ then
$\mathcal P(W^s(x,T_i)) \subset W^s(\mathcal P x,T_j)$, and
\item
for $x \in \mathrm{int}(T_i)$ with $\mathcal P^{-1}x \in \mathrm{int}(T_j)$ then 
$\mathcal P^{-1}(W^u(x,T_i)) \subset W^u(\mathcal P^{-1} x,T_j)$,
\end{enumerate}
where $W^s(x,T_i)$ and $W^u(x,T_i)$ denote the projections of $W^s_\epsilon(x)$ and 
$W^u_\epsilon(x)$ onto $T_i$, respectively.

The local product structure on $SM$ induces a local product structure, also
denoted $[\cdot,\cdot]$ on transverse sections by projecting along flow lines.
The rectangles $T_i$ may be chosen so that $T_i = [U_i,S_i]$, where 
$U_i$ and $S_i$ are closed subsets of 
local unstable and stable manifolds, respectively.
Associated to this, we have projection maps $\rho_i^u : T_i \to U_i$ and
$\rho_i^s : T_i \to S_i$.

\begin{proposition} [Bowen \cite{Bow}] \label{markov}
For all $\epsilon>0$, the flow has Markov sections $T_1,\ldots,T_k$ such that 
$\mathrm{diam}(T_i)<\epsilon$, for $i=1,\ldots,k$
and such that $\bigcup_{i=1}^k \phi_{[0,\epsilon]} T_i = \Omega(\phi)$.
\end{proposition}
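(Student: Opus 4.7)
The plan is to follow Bowen's classical construction, using that $\Omega(\phi)$ is compact: it lies in the unit tangent bundle over the compact convex core $C(\Gamma)/\Gamma$. First I would fix $\epsilon'>0$ small enough that the local product structure $[\cdot,\cdot]$ is defined and continuous on pairs of $\epsilon'$-close points, and cover $\Omega(\phi)$ by finitely many flow boxes $\phi_{(-\delta,\delta)}[U_x,S_x]$ of diameter less than $\epsilon$, with $U_x\subset W^u_{\epsilon'}(x)$ and $S_x\subset W^s_{\epsilon'}(x)$. Compactness gives such a finite cover.

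From this cover I would extract a preliminary collection of proper rectangles $T_1^{(0)},\ldots,T_N^{(0)}$ on transverse disks $D_1,\ldots,D_N$ satisfying $\bigcup_i \phi_{[0,\epsilon]} T_i^{(0)}\supset \Omega(\phi)$. The Poincar\'e return map $\mathcal P$ between them is defined, but the image of a local stable leaf of one rectangle need not lie in a single stable leaf of the target, so the Markov property fails in general. To force it, I would refine each $T_i^{(0)}$ by cutting along the traces in $T_i^{(0)}$ of the $\mathcal P^{\pm n}$-images of stable and unstable boundaries of all the $T_j^{(0)}$, for $n\ge 1$. Hyperbolicity (exponential contraction on $E^s$, expansion on $E^u$) makes these traces accumulate rapidly, so the refinement stabilises into a finite family of proper rectangles $T_1,\ldots,T_k$ whose boundaries are invariant under $\mathcal P$ in the sense required by (1) and (2); conditions (1) and (2) themselves follow by the construction of the boundaries.

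The main obstacle is the interplay of three constraints on the refined rectangles: small diameter, properness (each $T_i=\overline{\mathrm{int}(T_i)}$ in $D_i$) and the covering condition $\bigcup_i \phi_{[0,\epsilon]} T_i = \Omega(\phi)$. Bowen handles this through a specification/shadowing argument: starting from an $\epsilon$-dense pseudo-orbit in $\Omega(\phi)$, one shadows it by a genuine orbit and constructs the rectangles so that each $x\in\Omega(\phi)$ is coded by the sequence of rectangles its orbit visits. The combinatorics of shadowing then simultaneously yields properness, small diameter and the Markov property, at which point the verification of (1) and (2) is immediate from the definition of $W^s(x,T_i)$ and $W^u(x,T_i)$ as projections along the local product structure.
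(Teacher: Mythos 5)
The paper itself gives no proof of this proposition: it is quoted directly from Bowen's work on symbolic dynamics for hyperbolic flows (with the time-symmetric refinement deferred to Lemma \ref{adachi}), so the only meaningful comparison is with Bowen's original construction, which the first and last paragraphs of your outline do follow in spirit.

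Your middle step, however, contains a genuine gap. Cutting each preliminary rectangle along the traces of the $\mathcal P^{\pm n}$-images of the stable and unstable boundaries of all the rectangles, for all $n\geq 1$, does not ``stabilise'': hyperbolicity makes these traces accumulate, which means there are infinitely many distinct cutting leaves (typically dense in each rectangle), and the partition they generate has no reason to be finite. This is exactly the difficulty that makes the existence of finite Markov families nontrivial. In Bowen's construction finiteness is obtained differently: one first produces candidate rectangles by shadowing pseudo-orbits running through a sufficiently fine finite net of points of $\Omega(\phi)$, so that the rectangles have a crossing property (the unstable set through a point of $T_i$ maps across any $T_j$ it meets under one application of the Poincar\'e map, and symmetrically for stable sets under the inverse); each $T_i$ is then subdivided \emph{once}, according to its intersection pattern with the finitely many $T_j$ met at the next return, and the Markov conditions (1) and (2) are verified for the resulting finite family rather than being automatic. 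Properness also has to be re-established after subdivision (one passes to closures of interiors and checks nothing is lost), and the covering condition $\bigcup_{i=1}^k\phi_{[0,\epsilon]}T_i=\Omega(\phi)$ together with the diameter bound requires the sections to be placed along a time discretisation with return times smaller than $\epsilon$. Your closing appeal to shadowing gestures at the correct mechanism, but as written the argument fails at the ``refinement stabilises into a finite family'' claim, and it cannot be repaired without replacing that step by Bowen's one-step subdivision of a shadowing-constructed cover.
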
 

These sections may be chosen to reflect the time-reversal symmetry of the geodesic flow.

\begin{lemma}  [Adachi \cite{Ad}, Rees \cite{Rees}] \label{adachi}
The Markov sections $T_1,\ldots,T_k$ may be chosen so that there is a fixed point free 
involution $\kappa : \{1,\ldots,k\} \to \{1,\ldots,k\}$ such that
$A(\kappa j,\kappa i)=1$ if and only if $A(i,j)=1$. Furthermore, the involution is consistent with the time reversing involution:
$T_{\kappa i} = \iota(T_i)$, where $\iota : SM \to SM$ is the map $\iota(x,v)=(x,-v)$.
\end{lemma}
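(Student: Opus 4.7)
The plan is to start with a Markov family $\{T_i\}_{i=1}^{n}$ provided by Proposition \ref{markov} with small diameter, symmetrise it against the time-reversing involution $\iota(x,v)=(x,-v)$, and then refine the resulting collection to a genuine Markov family using the Sinai--Bowen procedure. The structural fact driving everything is that $\iota$ is a diffeomorphism of $SM$ conjugating the geodesic flow to its reverse, $\iota \circ \phi_t = \phi_{-t} \circ \iota$. Consequently $\iota$ preserves the non-wandering set $\Omega(\phi)$, exchanges the strong stable and strong unstable foliations ($\iota W^s_\epsilon(x)=W^u_\epsilon(\iota x)$ and vice versa), and intertwines the local product structure via $\iota([x,y])=[\iota y,\iota x]$.

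The first observation is that $\{\iota(T_i)\}_{i=1}^n$ is itself a Markov family: each $\iota(T_i)$ sits inside the transverse disk $\iota(D_i)$ and remains a proper rectangle, while the two defining Markov conditions (1) and (2) of Section 5 simply interchange under the substitution $\phi_t \leftrightarrow \phi_{-t}$. I then take the common refinement of $\{T_i\}$ and $\{\iota(T_i)\}$ in the standard way: wherever rectangles overlap after flowing, build new rectangles whose $U$- and $S$-sides are intersections of the $U$- and $S$-sides of the contributing pieces, and iterate until the collection is closed under the Poincar\'e map modulo this refinement. Because every step of this procedure is manifestly $\iota$-equivariant, the resulting refined Markov family $\{T'_j\}_{j=1}^{k}$ is permuted by $\iota$, so one may define $\kappa(j)$ by $\iota(T'_j)=T'_{\kappa j}$; this is an involution since $\iota^2=\mathrm{id}$. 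The transition-matrix compatibility $A(\kappa j,\kappa i)=A(i,j)$ is then automatic, because $A(i,j)=1$ encodes a flow segment from $T'_i$ to $T'_j$ not meeting any other section in between, and applying $\iota$ reverses it to a segment from $T'_{\kappa j}$ to $T'_{\kappa i}$.

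To see $\kappa$ is fixed-point free, choose the diameter bound $\epsilon$ in Proposition \ref{markov} smaller than $\tfrac12\inf_{(x,v)\in\Omega(\phi)}d_{SM}((x,v),(x,-v))$. This infimum is strictly positive because $\Omega(\phi)$ is compact (by convex co-compactness of $\Gamma$) and the geodesic flow has no fixed points, so $(x,v)\neq(x,-v)$ throughout $\Omega(\phi)$. Then no rectangle of the refined family can contain both $(x,v)$ and $(x,-v)$, so $T'_j\neq\iota(T'_j)$ for every $j$.

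The main obstacle is the common-refinement step: intersections of rectangles are in general not rectangles and may fail to be proper, so one has to invoke the Sinai--Bowen rebuilding procedure via the local product structure and then verify that the Markov property (unstable and stable fibres mapping across under the Poincar\'e map) survives the refinement. Once this technical construction is in place, the $\iota$-equivariance that produces $\kappa$ comes essentially for free, provided one is careful to perform every refinement operation simultaneously on matched pairs $(T_i,\iota(T_i))$.
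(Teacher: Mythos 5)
The paper itself gives no argument for this lemma: it is quoted directly from Adachi \cite{Ad} and Rees \cite{Rees}, who obtain time-reversal--symmetric sections by rerunning Bowen's construction \emph{equivariantly from the start} (the initial transverse disks and proper rectangles are chosen $\iota$-invariant, and every subsequent choice in the construction is made symmetrically), rather than by symmetrising an arbitrary Markov family afterwards. Your outline is sound up to exactly the point where all the content lies: the ``common refinement'' of $\{T_i\}$ and $\{\iota(T_i)\}$. You correctly observe that $\iota$ swaps the stable and unstable data, so $\{\iota(T_i)\}$ is again a Markov family (with conditions (1) and (2) interchanged, its Poincar\'e map being the $\iota$-conjugate of the inverse of the original one), and your fixed-point-freeness argument --- choosing diameters below $\inf_{(x,v)\in\Omega(\phi)}d\bigl((x,v),(x,-v)\bigr)>0$, which is positive by compactness of $\Omega(\phi)$ --- is fine. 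But the refinement step is asserted, not carried out, and it is precisely the nontrivial part.

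Concretely, the obstacles are these. The sections $T_i$ and $\iota(T_j)$ lie on different transverse disks, can intersect one another, and can be linked by arbitrarily short flow segments, so their union is not a family of disjoint sections with a well-defined first-return map and return time bounded below; to refine, one must project pieces of one family onto the disks of the other along the flow and subdivide, and after this surgery the Markov inclusions for the stable and unstable slices are not inherited formally. The diffeomorphism argument (``the intersection of two rectangles is a rectangle and the Markov inclusions persist'') does not apply verbatim, because here the refinement goes through flow holonomy and produces a new Poincar\'e map with more frequent returns; properness, non-empty interiors, pairwise disjointness, and the covering property $\bigcup_{i}\phi_{[0,\epsilon]}T_i'\supseteq\Omega(\phi)$ from Proposition \ref{markov} all have to be re-established, and $\iota$-equivariance must be preserved through each of these repairs. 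Saying ``invoke the Sinai--Bowen rebuilding procedure'' defers the proof rather than giving it: that procedure is not a black box that eats two section families and returns their common Markov refinement. The efficient (and the cited) route is to build the symmetry into Bowen's construction itself, making the initial data $\iota$-invariant so that no a posteriori symmetrisation is needed.
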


The Markov sections allow us to relate $\phi_t : \Omega(\phi) \to \Omega(\phi)$ to a suspended flow
over a mixing subshift of finite type, whose alphabet $\{1,\ldots,k\}$ corresponds to the Markov
sections $\{T_1,\ldots,T_k\}$.

\begin{proposition}[Bowen \cite{Bow}]\label{symdyn}
There exists a mixing subshift of finite type $\sigma : \Sigma_A \to \Sigma_A$, a strictly positive 
H\"older
continuous function $r : \Sigma_A \to \mathbb R^+$ and a map $\vartheta : \Sigma_A^r \to \Omega(\phi)$ such that
\begin{enumerate}
\item
$\vartheta$ is a semi-conjugacy (i.e. $\phi_t \circ \vartheta = \vartheta \circ \sigma_t^r$);
\item
$\vartheta$ is a surjection and is one-to-one on a residual set;
\item
$h(\phi)=h(\sigma^r) =h$.
\end{enumerate} 
\end{proposition}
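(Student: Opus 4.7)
The plan is to construct the symbolic model directly from the Markov sections $T_1,\ldots,T_k$ delivered by Proposition \ref{markov}. Take alphabet $S=\{1,\ldots,k\}$ and define $A(i,j)=1$ whenever some $x\in\mathrm{int}(T_i)$ has Poincar\'e return $\mathcal P(x)\in\mathrm{int}(T_j)$. The Markov property of the sections ensures that, for every admissible two-sided sequence $x=(x_n)_{n\in\mathbb Z}\in\Sigma_A$, the nested intersection $\bigcap_{n\in\mathbb Z}\mathcal P^{-n}(T_{x_n})$ is a single point $q(x)\in\Omega(\phi)\cap T_{x_0}$, because $\mathcal P$ contracts the unstable direction in backward time and the stable direction in forward time at uniform exponential rates. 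The resulting map $q:\Sigma_A\to\bigcup_i T_i$ is continuous and semi-conjugates $\sigma$ to $\mathcal P$ on its image, and irreducibility of $A$ is inherited from topological transitivity of $\phi$ on $\Omega(\phi)$.

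I would then take the roof function to be the first return time $r(x)=\inf\{t>0:\phi_t(q(x))\in\bigcup_i T_i\}$ and define $\vartheta:\Sigma_A^r\to\Omega(\phi)$ by $\vartheta(x,s)=\phi_s(q(x))$, giving the semi-conjugacy in (1) by construction. H\"older regularity of $r$ follows from uniform transversality of the sections to the flow together with exponential contraction of $\mathcal P$ along stable and unstable leaves: admissible sequences agreeing on $x_{-N},\ldots,x_N$ code points that are exponentially close in $SM$, and the return time depends smoothly on those points. Mixing of $\sigma$ reduces to aperiodicity of $A$, which follows from topological mixing of $\phi$ on $\Omega(\phi)$; this is the standard consequence of negative curvature that the length spectrum of closed geodesics generates a dense subgroup of $\mathbb R$, forbidding a cyclic decomposition of the return map. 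For surjectivity in (2), the covering property $\bigcup_i\phi_{[0,\epsilon]}T_i=\Omega(\phi)$ from Proposition \ref{markov} means every $\phi$-orbit in $\Omega(\phi)$ has a well-defined bi-infinite itinerary. For injectivity on a residual set, let $B=\bigcup_i\partial T_i$; properness of the rectangles makes $B$ nowhere dense, and the set of points whose entire $\phi$-orbit avoids $B$ forms a residual subset of $\Omega(\phi)$ on which $\vartheta^{-1}$ is single-valued.

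For the entropy identity in (3), the variational principle together with the semi-conjugacy and residual injectivity of $\vartheta$ yields $h(\phi)=h(\sigma^r)$, and the identification with $h=h(M)$ is the standard statement, recalled in Section 4, that the topological entropy of the geodesic flow on $\Omega(\phi)$ equals $h(M)$. The main obstacle is the handling of the boundary set $B$: the coding $q$ is genuinely multi-valued on the $\phi$-orbit of any point hitting $\partial T_i$, and controlling this ambiguity is what makes the passage between symbolic and geometric periodic orbits delicate. Properness of the rectangles and the Markov property confine the ambiguity to a meagre set, but verifying that prime periodic orbits correspond bijectively outside this set — so that the counting and pressure-theoretic estimates used in later sections are unaffected — requires care. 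The construction as a whole is classical, originating with Bowen for compact Axiom A flows, and transfers to the present setting because convex co-compactness forces $\Omega(\phi)=S(C(\Gamma)/\Gamma)$ to be compact and the restricted flow to be a $C^1$ hyperbolic flow in the sense of Section 4.
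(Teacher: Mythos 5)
First, for calibration: the paper offers no proof of this proposition at all — it is imported wholesale from Bowen \cite{Bow}, with the sections supplied by Proposition \ref{markov} — so your sketch is being measured against the classical construction rather than an argument in the text. Your outline does follow that classical route (itineraries through Markov sections, roof function equal to the return time, surjectivity from the covering property $\bigcup_i\phi_{[0,\epsilon]}T_i=\Omega(\phi)$, injectivity away from orbits meeting section boundaries), and most of it is sound. But one step is genuinely wrong as stated: you claim aperiodicity of $A$ follows from topological mixing of $\phi$ on $\Omega(\phi)$, the density of the length spectrum ``forbidding a cyclic decomposition of the return map''. The period of the transition matrix is an artefact of the chosen sections, not an invariant of the flow: a mixing flow can perfectly well be presented as a suspension, with non-constant roof, over a shift of period $p>1$. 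Non-arithmeticity of the length spectrum rules out the roof being cohomologous to a constant (topological weak mixing of the flow), which is a different statement. To obtain a mixing base one must do something extra, e.g.\ pass to the cyclic decomposition $S_0,\dots,S_{p-1}$ recalled in Section 3 and induce on one component with roof $r^p$ (the induced suspension is conjugate to the original), or choose the sections as Bowen does so that $A$ is aperiodic. This is not cosmetic for the paper, since Stadlbauer's criterion (Proposition \ref{stadlbauer}) is applied to a \emph{mixing} base shift.

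The entropy step also has a gap: injectivity of $\vartheta$ on a residual set does not by itself give $h(\sigma^r)\leq h(\phi)$, because a residual set can be null for every invariant measure, so maximal entropy could in principle be carried entirely by the bad fibres over orbits through $\bigcup_i\partial T_i$. What is actually needed (and what Bowen proves) is that $\vartheta$ is a boundedly finite-to-one surjective semiconjugacy, and finite-to-one factor maps preserve topological entropy; alternatively one can compare periodic-orbit counts as in Lemma \ref{bowenmanning} and Corollary \ref{cortobowenmanning}. Two minor further points: $\Omega(\phi)$ is the set of vectors tangent to geodesics with both endpoints in the limit set, hence a compact subset of, but in general not equal to, the unit tangent vectors over $C(\Gamma)/\Gamma$; and the periodic-orbit overcounting you flag at the end is not part of this proposition — the paper handles it separately, precisely via the Bowen--Manning Lemma \ref{bowenmanning}.
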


Clearly, the fixed point free involution $\kappa : \{1,\ldots,k\} \to \{1,\ldots,k\}$ induces a
fixed point free involution 
$\overline \kappa : \Sigma_A \to \Sigma_A$, defined by 
$(\overline \kappa x)_n = \kappa x_{-n}$. Furthermore,
$\vartheta \circ \overline \kappa = \iota \circ \vartheta$.

The above coding does not give a one-to-one correspondence between periodic orbits for
$\phi$ and $\sigma^r$. This is overcome by the following result, which 
is Bowen's generalisation to flows of a result of Manning for diffeomorphisms
\cite{Mann}.
For a flow $\xi_t$, let $\nu(\xi,T)$ denote the number of prime periodic $\xi$-orbits of period $T$ and $N_\xi(T)$ the number of periodic $\xi$-orbits to period
at most $T$.

\begin{lemma}[Bowen \cite{Bow}] \label{bowenmanning}
There exist a finite number of subshifts of finite type $\sigma_j : \Sigma_j \to \Sigma_j$ and strictly positive 
H\"older
continuous functions $r_j : \Sigma_j \to \mathbb R^+$, $j = 1,\ldots,q$, such
that
\begin{enumerate}
\item
$h(\sigma^{r_j}) < h$, $j=1,\ldots,q$; 
\item 
\[
\nu(\phi,T) = \nu(\sigma^r,T) + \sum_{j=1}^q (-1)^{\eta_j} \nu(\sigma_i^{r_j},T),
\]
where $\eta_j \in \{0,1\}$, $j=1,\ldots,q$.
\end{enumerate}
\end{lemma}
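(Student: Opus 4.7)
The plan is to implement the classical Manning inclusion--exclusion argument, adapted to flows by Bowen. The obstruction to $\vartheta$ being a bijection is that distinct symbolic sequences can code the same point when the orbit meets the boundary of a Markov rectangle. The strategy is to describe this failure precisely by a finite collection of \emph{boundary subshifts}, show each has strictly smaller entropy, and then combine them via inclusion--exclusion to correct the overcount in $\nu(\sigma^r,T)$.

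First, I would analyze the multiplicity of $\vartheta$. Each rectangle decomposes as $T_i = [U_i, S_i]$, with stable and unstable boundaries $\partial^s T_i$ and $\partial^u T_i$. The semi-conjugacy $\vartheta$ is one-to-one on the residual set of points whose full flow orbit avoids $\bigcup_i (\partial^s T_i \cup \partial^u T_i)$. For a point $x \in \Omega(\phi)$ whose orbit meets these boundaries, the number of preimages under $\vartheta$ is determined by the set of indices $I \subset \{1,\ldots,k\}$ such that a small flow-box neighborhood of $x$ sits in the common closure of the rectangles $\{T_i : i \in I\}$. The Markov property of the sections implies that the collection of orbits with a prescribed ``boundary incidence pattern'' is shift-invariant and carries its own symbolic description.

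Next, for each nonempty collection $I$ of rectangles with a common boundary piece, I would build an auxiliary mixing subshift of finite type $\sigma_j : \Sigma_j \to \Sigma_j$ whose alphabet consists of the compatible $|I|$-tuples, together with a return-time function $r_j$ inherited from $r$. The resulting suspension flows capture exactly the periodic orbits that are multiply counted by $\sigma^r$, and a standard M\"obius-type accounting assigns each such subshift a sign $(-1)^{\eta_j}$ (essentially the parity of the overcount) so that
\[
\nu(\phi,T) = \nu(\sigma^r,T) + \sum_{j=1}^q (-1)^{\eta_j}\nu(\sigma_j^{r_j},T).
\]
This is the direct analogue, at the level of periodic orbit counts, of Manning's formula for diffeomorphisms, lifted to the suspension via the one-to-one correspondence between periodic orbits of $\sigma^{r_j}$ and of $\sigma_j$.

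The hard part is verifying $h(\sigma^{r_j}) < h$ for each auxiliary system. The idea is that each $\Sigma_j$ is supported on boundary pieces which, as subsets of the Markov rectangles, are ``lower dimensional'' in the sense that they miss either the full local stable or full local unstable direction. Equivalently, the equilibrium state of $-h \cdot r$ on $\Sigma_A^r$ (the measure of maximal entropy for $\phi_t$) assigns zero mass to each boundary subsystem, since local stable and unstable foliations have no atoms for this measure. The variational principle for countable shifts (applied here in the finite-state case) then forces the topological pressure $P(\sigma_j, -h r_j) < 0$, and Abramov's formula converts this into $h(\sigma_j^{r_j}) < h$. The main delicacy is to set up the boundary subshifts so that they are genuinely shifts of finite type with H\"older roof functions, and to check that the signed sum is exact rather than merely approximate; once the combinatorics of the boundary incidence pattern is written out, both points reduce to the Markov property of the sections together with the local product structure $[\cdot,\cdot]$ on each $T_i$.
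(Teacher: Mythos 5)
The paper does not actually prove Lemma \ref{bowenmanning}: it is quoted directly from Bowen \cite{Bow} (Bowen's flow version of Manning's counting argument), so there is no in-paper proof to compare against, and your outline is essentially the argument of the cited source --- boundary subshifts recording the failure of injectivity of $\vartheta$, an exact inclusion--exclusion over boundary incidence patterns, and an entropy drop for the correction systems. The one step I would tighten is the strict inequality $h(\sigma_j^{r_j})<h$: the clean route is that $h(\sigma_j^{r_j})\le h$ because the correction suspensions map finite-to-one into the flow, and equality would produce a measure of maximal entropy supported on the closed, invariant, nowhere dense set of orbits trapped in the rectangle boundaries, contradicting uniqueness and full support of the Bowen--Margulis measure; ``non-atomicity of the stable/unstable foliations plus the variational principle'' alone does not quite deliver $P(\sigma_j,-hr_j)<0$, and the exact signed identity (rather than an approximate one) is precisely the nontrivial combinatorial content of \cite{Bow}.
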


\begin{corollary}\label{cortobowenmanning}
$N_\phi(T) = N_{\sigma^r}(T) +O(e^{h'T})$,
where
$h' := \max_{1 \leq j \leq q} h(\sigma^{r_j}) <h$.
\end{corollary}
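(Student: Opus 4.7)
The plan is to bootstrap the identity of Lemma \ref{bowenmanning} for prime orbit counts up to an identity for all periodic orbit counts, and then apply the entropy estimate to each auxiliary suspension flow.

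First I would use the standard bookkeeping relation between prime and non-prime orbits. Every periodic orbit is a $k$-fold cover of a unique prime orbit, so
\[
N_\xi(T) = \sum_{k=1}^\infty \nu(\xi, T/k),
\]
where the sum is effectively finite because $\nu(\xi, s) = 0$ whenever $s$ is smaller than the shortest prime period of $\xi$. This identity applies to $\phi$, to $\sigma^r$, and to each auxiliary system $\sigma_j^{r_j}$.

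Next, I would substitute the Bowen--Manning identity $\nu(\phi, T/k) = \nu(\sigma^r, T/k) + \sum_{j=1}^q (-1)^{\eta_j} \nu(\sigma_j^{r_j}, T/k)$ into the above sum and interchange summation orders to obtain the clean identity
\[
N_\phi(T) = N_{\sigma^r}(T) + \sum_{j=1}^q (-1)^{\eta_j} N_{\sigma_j^{r_j}}(T).
\]
Thus the task reduces to bounding each $N_{\sigma_j^{r_j}}(T)$.

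Finally, for each $j$ the quantity $h(\sigma^{r_j})$ is both the topological entropy of the suspension flow and the exponential growth rate of its (prime or non-prime) periodic orbit count; this is standard for suspension flows over subshifts of finite type (via Parry--Pollicott applied to each mixing component after the period-$p$ decomposition discussed in Section~3, or directly by the variational principle). Hence $N_{\sigma_j^{r_j}}(T) = O(e^{h(\sigma^{r_j}) T})$, and taking maxima over the finite set of $j$ yields $N_\phi(T) - N_{\sigma^r}(T) = O(e^{h' T})$ with $h' = \max_j h(\sigma^{r_j}) < h$.

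The only mild obstacle is verifying the bound $N_{\sigma_j^{r_j}}(T) = O(e^{h_j T})$ without an extra $\epsilon$ in the exponent. If one is willing to replace $h'$ by any strictly larger number still less than $h$, this follows immediately from the definition of topological entropy; to get the sharp form stated, one invokes the Parry--Pollicott asymptotic $\nu(\sigma_j^{r_j}, T) \sim e^{h_j T}/(h_j T)$ on each mixing piece, which in fact gives the slightly stronger bound $O(e^{h' T}/T)$.
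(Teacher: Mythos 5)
Your proposal is correct and is essentially the argument the paper intends (it states the corollary without proof as an immediate consequence of Lemma \ref{bowenmanning}): one sums the exact prime-orbit identity over all periods up to $T$ and over iterates to get $N_\phi(T)=N_{\sigma^r}(T)+\sum_{j}(-1)^{\eta_j}N_{\sigma_j^{r_j}}(T)$, and then bounds each auxiliary term by its entropy growth rate. Two small points to tidy: with the paper's convention that $\nu(\xi,T)$ counts prime orbits of period \emph{exactly} $T$, your identity should read $N_\xi(T)=\sum_{k\geq 1}\#\{\gamma \mbox{ prime}:\ l(\gamma)\leq T/k\}$ (i.e.\ use cumulative prime counts before substituting), and the sharp Parry--Pollicott asymptotic requires topological weak mixing of the suspension, though the upper bound $N_{\sigma_j^{r_j}}(T)=O(e^{h(\sigma^{r_j})T})$ holds in general and any exponent strictly below $h$ would in any case suffice for the later use in Lemma \ref{comparezetaZ}.
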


 We end the section by noting the following result.
 
 \begin{lemma} [\cite{PP}]\label{pressureforr}
The entropy $h$ is the unique real number for which $P(\sigma,-hr)=0$.
\end{lemma}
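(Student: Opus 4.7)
The plan is to prove this via the variational principle together with Abramov's formula relating entropies of the base and of a suspension flow.

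First, I would invoke the variational principle for the pressure of the base shift: since $\sigma : \Sigma_A \to \Sigma_A$ is a mixing subshift of finite type and $r$ is H\"older continuous,
\[
P(\sigma,-sr) = \sup_\mu \left\{ h_\mu(\sigma) - s\int r\, d\mu\right\},
\]
where the supremum is over $\sigma$-invariant Borel probability measures. Next, I would apply the variational principle for topological entropy to the suspension: $h(\sigma^r) = \sup_\nu h_\nu(\sigma^r_1)$, where the sup runs over $\sigma^r$-invariant Borel probability measures. The bridge between the two is Abramov's formula: such $\nu$ correspond bijectively to $\sigma$-invariant probability measures $\mu$ (with the normalized product of $\mu$ and Lebesgue on the suspension interval), and under this correspondence $h_\nu(\sigma^r) = h_\mu(\sigma)/\int r\, d\mu$. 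Combined with Proposition \ref{symdyn}(3), this yields
\[
h = \sup_\mu \frac{h_\mu(\sigma)}{\int r\, d\mu}.
\]

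Now I would derive $P(\sigma,-hr)=0$ in two inequalities. Rearranging the displayed identity gives $h_\mu(\sigma) - h\int r\, d\mu \leq 0$ for every $\sigma$-invariant probability measure $\mu$, hence $P(\sigma,-hr) \leq 0$. For the reverse, take $\mu$ to be the (unique) equilibrium state for $-hr$, or alternatively a sequence of measures approximating the sup, to see that the quotient $h_\mu(\sigma)/\int r\, d\mu$ attains (or approaches) $h$, forcing $P(\sigma,-hr) \geq 0$.

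Finally I would address uniqueness. Because $\Sigma_A$ is compact and $r$ is continuous and strictly positive, $\inf r > 0$. For $s_1 < s_2$, letting $\mu$ denote the equilibrium state for $-s_2 r$ gives
\[
P(\sigma,-s_2 r) = h_\mu(\sigma) - s_2\int r\, d\mu \leq h_\mu(\sigma) - s_1\int r\, d\mu - (s_2-s_1)\inf r \leq P(\sigma,-s_1 r) - (s_2-s_1)\inf r,
\]
so $s \mapsto P(\sigma,-sr)$ is strictly decreasing and therefore has at most one zero.

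The main obstacle is simply the bookkeeping between the base shift and the suspension, specifically the correct invocation of Abramov's formula and the guaranteed existence of the equilibrium state for a H\"older potential over a mixing subshift of finite type. Both are classical facts in thermodynamic formalism (e.g.~\cite{PP}); once they are invoked, the argument reduces to the monotonicity computation above.
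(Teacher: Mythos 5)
Your argument is correct and is essentially the classical proof that the paper is deferring to when it cites \cite{PP}: Abramov's formula plus the variational principle gives $h=\sup_\mu h_\mu(\sigma)/\int r\,d\mu$, from which $P(\sigma,-hr)=0$, and strict monotonicity of $s\mapsto P(\sigma,-sr)$ (using $\inf r>0$) gives uniqueness. One small caveat: invoking ``the equilibrium state for $-hr$'' to get $P(\sigma,-hr)\geq 0$ is slightly circular as phrased (that the equilibrium state realises the supremum of the ratio is a consequence of the identity being proved), but your alternative via a sequence of measures approximating the supremum, or an appeal to upper semicontinuity of entropy for subshifts of finite type, closes this cleanly.
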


\section{The Skew Product Extension}

In this section we will describe a skew product extension of (the one-sided version of)
the shift of finite
type introduced above,
which will serve to encode information about how orbits on $SM$ lift to $S\widetilde M$, and relate 
this construction to the result of Stadlbauer, Proposition \ref{stadlbauer}, stated above.

Choose $\epsilon_0>0$ sufficiently small that every open ball in $SM$ with diameter less than 
$\epsilon_0$
is simply connected.
Let $U \subset SM$ be such an open ball. 
Then
$\pi^{-1}(U) = \bigcup_{g \in G} g \cdot \widetilde U$, 
where $\widetilde U$  is a connected component of $\pi^{-1}(U)$.
Since we can choose the Markov sections $T_i$ to have arbitrarily small diameters, 
for each $i = 1,\ldots,k$, we can choose $U_i$ be an open ball of diameter less than $\epsilon_0$ containing 
$T_i$. As above, we may write
$\pi^{-1}(U_i) = \bigcup_{g \in G} g \cdot \widetilde U_i$ and
$\pi^{-1}(T_i) = \bigcup_{g \in G} g \cdot \widetilde T_i$, where 
$\widetilde T_i = \pi^{-1}(T_i) \cap \widetilde U_i$, and we may assume that this decomposition is chosen with $\widetilde \iota (\widetilde T_i) = \widetilde T_{\kappa i}$, where $\iota$ is the time-reversing involution $\widetilde \iota : S\widetilde M \to S \widetilde M$ given by $\widetilde \iota (x,v) = (x,-v)$.

We will use the notation
\[
\widetilde{\mathcal T} = \pi^{-1}(\mathcal T)
= \bigcup_{i=1}^k \bigcup_{g \in G} g \cdot \widetilde T_i.
\]
Notice that each lifted section $g \cdot \widetilde T_i$ is transverse to the flow
$\widetilde \phi_t : S\widetilde M \to S\widetilde M$. 
We write write $\widetilde{\mathcal P} : \widetilde{\mathcal T} \to \widetilde{\mathcal T}$ for
the Poincar\'e map.

\begin{lemma} \label{technicalclaim}
Suppose that $A(i,j)=1$. There is a unique $g = g(i,j) \in G$ such that for any
$x \in T_i \cap \mathcal P^{-1}(T_j)$,
any $\widetilde x \in \pi^{-1}(x)$, and any $h\in G$, 
if $\widetilde x \in   h\cdot \widetilde T_i$ then $\widetilde{\mathcal P}(\widetilde x) \in
hg \cdot \widetilde T_j$.
In addition, $g(\kappa j, \kappa i)=g(i,j)^{-1}$. 
\end{lemma}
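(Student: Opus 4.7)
The plan is to build $g(i,j)$ by lifting a single orbit segment of the base flow, and then to eliminate the dependence on all auxiliary choices using the $G$-equivariance of $\widetilde\phi_t$ together with the Markov structure. Fix $x\in T_i\cap\mathcal{P}^{-1}(T_j)$ with first return time $\tau(x)>0$, so $\phi_{\tau(x)}(x)=\mathcal{P}(x)\in T_j$. For any lift $\widetilde x\in h\cdot\widetilde T_i$, the curve $\widetilde\phi_{[0,\tau(x)]}(\widetilde x)$ projects onto $\phi_{[0,\tau(x)]}(x)$, and since $\widetilde\phi_s(\widetilde x)$ covers $\phi_s(x)\notin\mathcal{T}$ for $0<s<\tau(x)$, its endpoint $\widetilde\phi_{\tau(x)}(\widetilde x)$ is the first return of $\widetilde x$ to $\widetilde{\mathcal T}$, i.e.\ $\widetilde{\mathcal P}(\widetilde x)$. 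This endpoint lies in a unique translate $h\,g(x,h)\cdot\widetilde T_j$.

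Next I would show $g(x,h)$ is independent of $h$ and of $x$. The $G$-action on $S\widetilde M$ commutes with $\widetilde\phi_t$, so replacing $\widetilde x$ by $(h'h^{-1})\cdot\widetilde x\in h'\cdot\widetilde T_i$ simply translates the whole lifted segment by $h'h^{-1}$ on the left; this kills the $h$-dependence, and I write the common value $g(x)$. Continuity of $\phi_t$ and of the return time $\tau$ means that for $x'$ close to $x$ the lifted endpoint $\widetilde\phi_{\tau(x')}(\widetilde{x'})$ stays close to $\widetilde\phi_{\tau(x)}(\widetilde x)$; since $\epsilon_0$ was chosen so that the translates $g\cdot\widetilde U_j$ are the distinct connected components of $\pi^{-1}(U_j)$, the endpoint must lie in the same translate $g(x)\cdot\widetilde T_j$, so $g(x)$ is locally constant. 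The Markov property guarantees that $T_i\cap\mathcal{P}^{-1}(T_j)$ is a single (connected) sub-rectangle of $T_i$, whence $g(x)\equiv g(i,j)$; disjointness of the translates $\{g\cdot\widetilde T_j\}_{g\in G}$ then makes $g(i,j)$ unique.

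For the symmetry, I would exploit the time-reversing involution $\widetilde\iota$. By Lemma \ref{adachi} and our compatible choice of lifts, $\widetilde\iota(\widetilde T_i)=\widetilde T_{\kappa i}$; moreover $\widetilde\iota$ commutes with every $g\in G$, since $g\cdot(x,v)=(gx,Dg_xv)$ and $\widetilde\iota(x,v)=(x,-v)$. Take $\widetilde x\in\widetilde T_i$ and set $\widetilde y=\widetilde{\mathcal P}(\widetilde x)\in g(i,j)\cdot\widetilde T_j$. Because $\widetilde\iota$ conjugates $\widetilde\phi_t$ into $\widetilde\phi_{-t}$, the forward $\widetilde\phi$-orbit of $\widetilde\iota(\widetilde y)$ first returns to $\widetilde{\mathcal T}$ precisely at $\widetilde\iota(\widetilde x)$. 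Now $\widetilde\iota(\widetilde y)\in g(i,j)\cdot\widetilde T_{\kappa j}$ and $\widetilde\iota(\widetilde x)\in 1_G\cdot\widetilde T_{\kappa i}$, so the defining property of $g(\kappa j,\kappa i)$ applied with starting translate $h=g(i,j)$ gives $g(i,j)\,g(\kappa j,\kappa i)\cdot\widetilde T_{\kappa i}=1_G\cdot\widetilde T_{\kappa i}$, whence $g(\kappa j,\kappa i)=g(i,j)^{-1}$.

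The step where I expect to have to be most careful is the local-constancy/connectedness argument: one genuinely needs the Markov property to present $T_i\cap\mathcal{P}^{-1}(T_j)$ as a single sub-rectangle rather than a disjoint union, since a locally constant $G$-valued function on a disconnected set could \emph{a priori} take different values on different components. Once that is in place, the rest is just bookkeeping of the $G$-action and its compatibility with the time-reversing involution $\widetilde\iota$.
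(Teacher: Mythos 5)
Your construction of $g$ for a fixed $x$, the elimination of the $h$-dependence by $G$-equivariance, and the time-reversal argument for $g(\kappa j,\kappa i)=g(i,j)^{-1}$ all match the paper. The gap is in the step you yourself flag as delicate: you deduce independence of $x$ from local constancy plus the claim that the Markov property makes $T_i\cap\mathcal{P}^{-1}(T_j)$ a single \emph{connected} sub-rectangle. ``Rectangle'' in this construction only means closed under the local product structure $[\cdot,\cdot]$; it carries no connectedness. Worse, in the setting of this paper the sections satisfy $T_i\subset \mathrm{int}(D_i)\cap\Omega(\phi)$, and for a convex co-compact (non-cocompact) group the non-wandering set meets the transversals in a fractal, typically totally disconnected, set (think of a Schottky group, whose limit set is a Cantor set). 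So $T_i$, let alone $T_i\cap\mathcal{P}^{-1}(T_j)$, is in general a Cantor set, and a locally constant $G$-valued function on it can perfectly well be non-constant. Your argument therefore does not establish that $g(x)$ is the same for all $x\in T_i\cap\mathcal{P}^{-1}(T_j)$, which is the whole content of the lemma. (A smaller point: continuity of the first-return time to $\mathcal{T}$ can also fail at points whose connecting orbit segment grazes the boundary of another section, so even the local constancy needs care.)

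The paper closes this gap by a global smallness argument rather than a connectedness argument: the sections are chosen with diameters, and the transition times between consecutive sections, so small that for any two points $x_1,x_2\in T_i\cap\mathcal{P}^{-1}(T_j)$ the rectangles $T_i$, $T_j$ and both orbit segments $c_1,c_2$ joining them to their images are contained in a single open ball $U\subset SM$ of diameter less than $\epsilon_0$, hence simply connected. Then $\pi^{-1}(U)$ is a disjoint union of sheets $g\cdot\widetilde U$, the sheet $\widetilde U$ containing the lift $\widetilde c_1$ starting in $\widetilde T_i$ must contain all of $\widetilde T_i\cup g\cdot\widetilde T_j$ (where $g$ is determined by the endpoint of $\widetilde c_1$), and the lift $\widetilde c_2$ starting in $\widetilde T_i$ is forced to stay in the same sheet, so its endpoint also lies in $g\cdot\widetilde T_j$. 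If you replace your local-constancy/connectedness step by this ``everything fits in one evenly covered ball'' argument, the rest of your proof goes through as written.
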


\begin{proof}
We will begin be proving the existence and uniqueness of $g$.
Let $x_1,x_2 \in T_i \cap \mathcal P^{-1}(T_j)$ and
let $c_1$ and $c_2$ be the $\phi$-orbit segments from $c_1(0)=x_1$ and $c_2(0)=x_2$ to
$c_1(1)=\mathcal P(x_1)$ and $c_2(1)=\mathcal P(x_2)$. We will show that there is a unique 
$g \in G$ such that the unique lifts of $c_1$ and $c_2$ that begin in $\widetilde T_i$ both 
have terminal points in $g \cdot \widetilde T_j$. The statement for all $h\in G$ will follow by 
translating by the isometry $h\in G$.

Let $\widetilde c_1$ and $\widetilde c_2$ be lifts of $c_1$ and $c_2$ with 
$\widetilde c_1(0), \widetilde c_2(0)\in \widetilde T_i$. Having chosen the Markov partition to 
have sufficiently small diameters
and the flow times between rectangles to be sufficiently small, there is an open ball $U\subset SM$ of 
diameter less than $\epsilon_0$ containing $T_i, T_j, c_1$ and $c_2$. Let $\widetilde U$ be the 
connected component of $\pi^{-1}(U)$ containing $\widetilde c_1$. Then $\widetilde U \cap \widetilde 
T_i \neq \varnothing$ and $\widetilde U \cap g\cdot \widetilde T_j \neq \varnothing$ and so $\widetilde 
T_i \cup g\cdot \widetilde T_j \subseteq \widetilde U$. It follows that $\widetilde c_2$ is entirely 
contained in $\widetilde U$ and so we must have $\widetilde c_2(1)\in g\cdot T_j$ as required.

For the final part, we note that $\iota (\widetilde c_1)$ is an orbit segment from $g\cdot \widetilde 
T_{\kappa j}$ to $\widetilde T_{\kappa i}$. It follows from the uniqueness in the previous that $g(\kappa 
j, \kappa i)=g(i,j)^{-1}$.
\end{proof}

We use the preceding lemma to define a skew product extension of the one-sided
shift of finite type $\sigma : \Sigma_A^+ \to \Sigma_A^+$.
Define $\psi : \Sigma_A^+ \to G$ (depending on two co-ordinates) by
$\psi(x)=\psi(x_0,x_1)=g(x_0,x_1)$, where $g=g(x_0,x_1)$ is the unique element of $G$ 
given by Lemma \ref{technicalclaim}. Then the skew product
$\widetilde \sigma: \Sigma_A^+ \times G \to \Sigma_A^+ \times G$
is defined by
\[
\widetilde \sigma(x,g) = (\sigma x, g\psi(x)).
\]
Furthermore, part (2) of Lemma \ref{technicalclaim} shows that the skew product extension is {\it symmetric} (with respect to the involution $\kappa$), i.e. that
$\psi(\kappa j,\kappa i)
= \psi(i,j)^{-1}$.

We note the relationship between periodic orbits for the lifted geodesic flow and for the skew product.

\begin{lemma}
A  periodic $\phi$-orbit $\gamma$ in $SM$, corresponding to a periodic $\sigma$-orbit
$\tau = \{x,\sigma x,\ldots,\sigma^{n-1}x\}$, lifts to a periodic orbit on $S\widetilde M$ if
and only if $\psi_n(x) =1_G$.
\end{lemma}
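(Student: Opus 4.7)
The plan is to iterate Lemma \ref{technicalclaim} along the periodic orbit. Let $y\in T_{x_0}$ denote an actual point of $SM$ lying on $\gamma$ whose successive Poincar\'e iterates visit the sections indexed by $x$, so that $\mathcal P^i(y)\in T_{x_i}$ for $i=0,\ldots,n-1$ and $\mathcal P^n(y)=y$; this is precisely what it means for $\gamma$ to correspond to the $\sigma$-orbit of $x$. Pick any lift $\widetilde y\in \widetilde T_{x_0}\subset \widetilde{\mathcal T}$, which amounts to choosing the group element $h=1_G$ in the setup of Lemma \ref{technicalclaim}.

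Applying Lemma \ref{technicalclaim} with $h=1_G$ gives $\widetilde{\mathcal P}(\widetilde y)\in \psi(x_0,x_1)\cdot \widetilde T_{x_1}$. Proceeding by induction and applying the lemma again with $h$ equal to the cumulative product of the $\psi$-values so far, one obtains, for $k=1,\ldots,n$,
\[
\widetilde{\mathcal P}^k(\widetilde y)\ \in\ \psi(x_0,x_1)\psi(x_1,x_2)\cdots\psi(x_{k-1},x_k)\cdot \widetilde T_{x_k}.
\]
Setting $k=n$ and using $x_n=x_0$ (since $\sigma^n x=x$) yields $\widetilde{\mathcal P}^n(\widetilde y)\in \psi_n(x)\cdot \widetilde T_{x_0}$.

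On the other hand, the identity $\mathcal P^n(y)=y$ implies that $\widetilde{\mathcal P}^n(\widetilde y)$ is some lift of $y$, hence equals $g\cdot \widetilde y$ for a unique $g\in G$. Since the translates $\{g\cdot \widetilde T_{x_0}:g\in G\}$ are pairwise disjoint and $\widetilde y\in \widetilde T_{x_0}$, the containment from the previous step forces $g=\psi_n(x)$, so $\widetilde{\mathcal P}^n(\widetilde y)=\psi_n(x)\cdot \widetilde y$. Translating this Poincar\'e-map statement back to the flow, the return time of $\widetilde \phi_t$ from $\widetilde y$ to $\widetilde{\mathcal P}^n(\widetilde y)$ is $l(\gamma)$, so the lifted orbit through $\widetilde y$ closes up at time $l(\gamma)$ if and only if $\psi_n(x)\cdot \widetilde y=\widetilde y$; freeness of the $G$-action on $S\widetilde M$ makes this equivalent to $\psi_n(x)=1_G$.

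There is no real obstacle here: the lemma is essentially a bookkeeping statement that unfolds the definition of $\psi$ via repeated application of Lemma \ref{technicalclaim}. The only minor care required is to fix the base lift inside $\widetilde T_{x_0}$ (which is just a normalization) and to invoke freeness of the deck action in the final step; the independence of the conclusion from the particular choice of lift follows from $\psi_n(h\cdot \widetilde y)$-equivariance, i.e.\ $\widetilde{\mathcal P}^n(h\cdot \widetilde y)=h\psi_n(x)\cdot \widetilde y$, so the condition $\psi_n(x)=1_G$ is intrinsic to the periodic $\sigma$-orbit.
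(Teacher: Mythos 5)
Your proof is correct and follows essentially the same route as the paper: lift the orbit starting in $\widetilde T_{x_0}$, iterate Lemma \ref{technicalclaim} to see the lift ends in $\psi_n(x)\cdot\widetilde T_{x_0}$, and conclude it closes up if and only if $\psi_n(x)=1_G$. You merely spell out the induction, the disjointness of the translates $g\cdot\widetilde T_{x_0}$, and the freeness of the deck action, which the paper leaves implicit.
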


\begin{proof}
We will treat $\vartheta(x)$ as the initial point on $\gamma$. Let $\widetilde \gamma$ be the 
lift of $\gamma$ which
starts in $\widetilde T_{x_0}$. By Lemma \ref{technicalclaim}, $\widetilde \gamma$ ends in 
$\psi_n(x) \cdot \widetilde T_{x_0}$ and is thus periodic if and only if $\psi_n(x) =1_G$.
\end{proof}

We shall apply Proposition \ref{stadlbauer} to the skew product extension
$\widetilde \sigma: \Sigma_A^+ \times G \to \Sigma_A^+ \times G$.
To do this, we need to establish that two further conditions are satisfied:
that $\widetilde \sigma$ is transitive and that $r$ is weakly symmetric. We start with transitivity.

\begin{lemma} 
If $G$ is not equal to $\pi_1(M)$ then
the map $\widetilde \sigma: \Sigma_A^+ \times G \to \Sigma_A^+ \times G$ is transitive.
\end{lemma}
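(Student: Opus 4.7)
The plan is to verify that the transition matrix $\widetilde A$ of the skew product is irreducible. Standard arguments using the already-established irreducibility of $A$ reduce this to showing that for some fixed $i \in S$, the set
\[
H_i := \{\psi_n(w) : w \hbox{ is an allowed loop at } i \hbox{ of some length } n \geq 1\}
\]
equals $G$; indeed, given a single path $i\to j$, one can translate loops at $i$ to allowed words $i\to j$ realising every element of $G$.

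I would first establish that $H_i$ is a subgroup of $G$. Closure under multiplication is immediate from concatenating loops, and $1_G\in H_i$ follows from the trivial or a period-length loop. Closure under inverses is the delicate point, and exploits the $\kappa$-symmetry. By Lemma \ref{technicalclaim} one has $\psi(\kappa j, \kappa i) = \psi(i,j)^{-1}$, so the $\kappa$-reverse $\overline w = \kappa w_n, \kappa w_{n-1}, \ldots, \kappa w_0$ of a loop $w$ at $i$ is an allowed loop at $\kappa i$ with $\psi$-value $\psi_n(w)^{-1}$. Using irreducibility of $A$, pick paths $p: i\to\kappa i$ of value $u$ and $q: \kappa i\to i$ of value $v$; since the $\kappa$-reverses $\overline p$ and $\overline q$ give additional paths (in the same directions) with inverse values $u^{-1}$, $v^{-1}$, the loops $pq,\ p\,\overline{q},\ \overline{p}\,q,\ \overline{p}\,\overline{q}$ at $i$ contribute the values $uv,\ uv^{-1},\ u^{-1}v,\ u^{-1}v^{-1}$ to $H_i$, while $p\,\overline w\, q$ contributes $uh^{-1}v$. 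Careful combination of these (using that $H_i$ is already closed under products) then extracts $\psi_n(w)^{-1}$ itself as an element of $H_i$.

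Next I would show $H_i = G$ by geometric realisation. Each loop $w$ at $i$ corresponds to an orbit segment of $\phi$ with Poincar\'e-return to $T_i$, and its lift to $S\widetilde M$ starts in $\widetilde T_i$ and ends in $\psi_n(w)\cdot \widetilde T_i$. Using density of closed orbits in $\Omega(\phi)$, the bijection between prime closed $\phi$-orbits and nontrivial conjugacy classes of $\Gamma$, and the Anosov specification/shadowing property of the hyperbolic flow, one shows that every class in $\pi_1(M)=\Gamma$ can be realised (up to finite path-transport to and from $T_i$) as a Poincar\'e-return loop based at $T_i$. Projecting to $G=\Gamma/\Gamma_0$ and using the subgroup property of $H_i$, we obtain $H_i\supseteq$ a generating set of $G$, hence $H_i=G$. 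The hypothesis $G\neq\pi_1(M)$, i.e.\ $\Gamma_0\neq\{e\}$, excludes the universal-cover case, in which the relationship between $\psi$-values and elements of $G=\Gamma$ becomes degenerate and the argument needs to be recast.

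The main obstacle will be the subgroup step: the algebra of manufacturing genuine inverses inside $H_i$ from $\kappa$-symmetry, path-transport, and loop concatenation requires careful bookkeeping because the one-sided shift does not admit literal time reversal. The geometric realisation step, while substantive, follows reasonably familiar patterns for hyperbolic flows on covers once the subgroup property is in hand.
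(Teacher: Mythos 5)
There is a genuine gap, and it is located exactly where the hypothesis $G \neq \pi_1(M)$ (i.e.\ $\Gamma_0 \neq \{1\}$) has to do work: your sketch never uses it. If your argument were correct as outlined, it would apply verbatim to $\Gamma_0 = \{1\}$, $G = \Gamma$, and would prove that the skew product $\Sigma_A^+ \times \Gamma$ is transitive; but that is false, which is precisely why the hypothesis is there. (Transitivity of the skew product would push forward, via the lifted coding, to a dense $\widetilde\phi$-orbit in $\pi^{-1}(\Omega(\phi)) \subset SX$, and on $SX$ every geodesic orbit leaves all compact sets, so no orbit is dense. Concretely, in the case $G=\Gamma$ the value $\psi_n(w)=g$ of a loop at $i$ forces some geodesic with both endpoints in $L_\Gamma$ to pass first through $\widetilde T_i$ and later through $g\widetilde T_i$, which by the local product structure requires the stable and unstable ``shadows'' of $T_i$ at infinity to meet their $g$-translates; for $g$ hyperbolic with axis far from these shadows this fails, so $H_i \subsetneq \Gamma$.) The logical failure in your step 2 is the passage from the realisation of conjugacy classes to $H_i = G$: a closed geodesic through some section, transported to the base section $i$, only yields an element of the form $c\,h\,c'$ with $cc' \in H_i$, i.e.\ (even granting the subgroup property) some $G$-conjugate of some representative of each class. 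A subgroup meeting every conjugacy class need not be the whole group --- only its normal closure is --- so ``$H_i$ contains a generating set'' does not follow, and indeed it is false when $\Gamma_0=\{1\}$. Any correct proof must exploit that a nontrivial normal subgroup of a non-elementary group has full limit set, $L_{\Gamma_0}=L_\Gamma$; your outline has no substitute for this input.

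Secondarily, the subgroup step is not actually carried out. Because the involution sends a path $i \to \kappa i$ to another path $i \to \kappa i$ (not one in the reverse direction), the elements you can manufacture are of the form $u^{\pm 1}h^{-1}v^{\pm 1}$, $u^{\pm1}v^{\pm1}$ and their products; the claimed ``careful combination'' extracting $h^{-1}$ itself is not exhibited and is not obviously possible from these data alone. For comparison, the paper avoids all of this: it quotes the fact that, since $\Gamma_0$ is a nontrivial normal subgroup, the geodesic flow $\widetilde\phi_t$ on $S\widetilde M$ is transitive (Dal'bo, Eberlein) --- this is where the hypothesis and $L_{\Gamma_0}=L_\Gamma$ enter --- and then takes a point of $\widetilde{\mathcal T}$ with dense $\widetilde{\mathcal P}$-orbit and runs it through an arbitrary cylinder $[(i_0,g_0),\ldots,(i_n,g_n)]$, using Lemma \ref{technicalclaim} to match the symbolic $G$-coordinates with the lifted sections. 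If you want to keep a symbolic route, you would still need a geometric lemma of this strength (e.g.\ that for every $g\in G$ some $\Gamma_0$-translate arranges the shadow intersections above), at which point you have essentially reproduced the paper's use of transitivity upstairs.
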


\begin{proof}
If $G$ is not equal to $\pi_1(M)$ then the geodesic flow $\widetilde \phi_t : S
\widetilde M \to
S\widetilde M$ is transitive. A proof is given in \cite{Dalbo} (page 94) for the case where
$X = \mathbb H^2$ but the argument clearly generalizes. (See also \cite{Eb} for the case of
variable curvature when
 $\Gamma$ is co-compact.)
Let $x \in S\widetilde M$ be a point with dense $\widetilde \phi$-orbit. Without loss of generality
$x \in \widetilde{\mathcal T}$ and then
$\{\widetilde{\mathcal P}^nx\}_{n=-\infty}^\infty$ is dense in $\widetilde{\mathcal T}$.
Suppose that 
$\widetilde A((i_j,g_j),(i_{j+1},g_{j+1}))=1$, where $\widetilde A$ is the transition
matrix for $\Sigma_A^+ \times G$, for
$j=0,\ldots,n$. Then
\[
U  = \bigcap_{j=0}^n \widetilde{\mathcal P}^{-j}(\mathrm{int}(g_j \cdot \widetilde T_{i_j}))
\]
is non-empty and open in $\widetilde{\mathcal T}$. 
(Here $\mathrm{int}(g_j \cdot \widetilde T_{i_j})$ is taken with respect to the 
co-dimension one disk containing $g_j \cdot \widetilde T_{i_j}$.)
Since $x$ has dense $\widetilde{\mathcal P}$-orbit, $\widetilde{\mathcal P}^m x \in U$ for some
$m \in \mathbb Z$. 
Then $\widetilde{\mathcal P}^{m+j}(x) \in \mathrm{int}(g_j \cdot \widetilde T_{i_j})$
for $j=0,\ldots,n$. By definition, this implies that the $\widetilde \sigma$-orbit of 
$(\vartheta(\pi(x)),g_0) \in \Sigma_A^+ \times G$ 
(where $\vartheta(\pi(x))$ is identified with a point in the one-sided shift)
passes through the (arbitrary) cylinder  
$[(i_0,g_0),\ldots,(i_n,g_n)]$ and is thus dense in $\Sigma_A^+ \times G$. Therefore,
$\widetilde \sigma : \Sigma_A^+ \times G \to \Sigma_A^+ \times G$ is transitive. 
\end{proof}

Let $r : \Sigma_A \to \mathbb R$ be the H\"older continuous function defined by
Proposition \ref{symdyn}. By Lemma \ref{sinai}, there is a H\"older continuous function on
$\Sigma_A^+$, which we will abuse notation by continuing to call $r$, with the same sums 
around periodic orbits.

\begin{lemma} \label{risweaksymm}
For any $\xi \in \mathbb R$, the function $-\xi r : \Sigma_A^+ \to \mathbb R$ is weakly symmetric.
\end{lemma}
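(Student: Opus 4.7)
The plan is to prove the stronger statement that $|r^n(x) - r^n(y)|$ is uniformly bounded, independent of $n$ and of the cylinder $[z_0,\ldots,z_{n-1}]$, for all $x \in [z_0,\ldots,z_{n-1}]$ and $y \in [\kappa z_{n-1},\ldots,\kappa z_0]$. Granted such a uniform bound $C$, one simply takes $D_n = e^{|\xi|C}$ and obtains $D_n^{1/n} \to 1$ trivially.

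First I would work on the two-sided shift $\Sigma_A$. Given $x \in [z_0,\ldots,z_{n-1}] \subset \Sigma_A^+$, fix any two-sided lift $x' \in \Sigma_A$ with $x'_j = z_j$ for $0 \leq j \leq n-1$, and set $w = \bar\kappa\sigma^n x'$ and $y' = \sigma w$. A direct computation of coordinates gives $y'_j = \kappa z_{n-1-j}$ for $0 \leq j \leq n-1$, so the projection of $y'$ to $\Sigma_A^+$ lies in $[\kappa z_{n-1},\ldots,\kappa z_0]$. The semiconjugacy of Proposition \ref{symdyn} gives $\phi_{r^n(x')}(\vartheta(x')) = \vartheta(\sigma^n x')$; applying $\iota$ and using $\iota \circ \phi_t = \phi_{-t} \circ \iota$ together with $\vartheta \circ \bar\kappa = \iota \circ \vartheta$, one deduces $\phi_{r^n(x')}(\vartheta(w)) = \vartheta(\sigma^n w)$. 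Combined with the semiconjugacy applied at $w$, this forces $r^n(w) = r^n(x')$ (away from coincidental short-period orbits, and then everywhere by continuity). A single telescoping step yields $r^n(y') = r^n(w) - r(w) + r(\sigma^n w)$, and since $r$ is bounded on the compact $\Sigma_A$, one gets $|r^n(y') - r^n(x')| \leq 2\|r\|_\infty$ on the two-sided shift.

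To transfer this estimate to $\Sigma_A^+$, I would invoke Lemma \ref{sinai} in the stronger cohomological form it implies: the two-sided $r$ differs from the one-sided $r$, pulled back through the natural projection $\Sigma_A \to \Sigma_A^+$, by a bounded H\"older coboundary $v \circ \sigma - v$ (this is the usual Livsic-type conversion from equality of periodic sums to cohomology). Each $n$-sum is thereby modified by a uniformly bounded quantity, so the two-sided estimate gives a uniform bound on $|r^n(p(x')) - r^n(p(y'))|$ on $\Sigma_A^+$. Finally, local H\"older continuity of the one-sided $r$ with parameter $\theta \in (0,1)$ implies that within any length-$n$ cylinder $r^n$ varies by at most $C\sum_{j=1}^\infty \theta^j$, independent of $n$. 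Chaining these estimates for arbitrary $x \in [z_0,\ldots,z_{n-1}]$ and $y \in [\kappa z_{n-1},\ldots,\kappa z_0]$ delivers the required uniform bound on $|r^n(x) - r^n(y)|$.

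The substantive step is the identity $r^n(\bar\kappa \sigma^n x') = r^n(x')$, which rests on the compatibility of the symbolic time-reversal $\bar\kappa$ with the geometric involution $\iota$ under the coding map $\vartheta$; this compatibility is ultimately secured by the symmetric choice of Markov sections recorded in Lemma \ref{adachi}. Everything else is a routine combination of H\"older estimates, telescoping, and Sinai's lemma, and I do not foresee any genuine obstacle beyond keeping the bookkeeping of coordinate shifts and lifts consistent.
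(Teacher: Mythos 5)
Your argument is sound, but it takes a genuinely different route from the paper's. The paper never proves a pointwise time-reversal identity: using mixing it places a periodic point of period $n+N$ (with $N$ fixed) inside each cylinder $[z_0,\ldots,z_{n-1}]$, observes that the corresponding closed orbit $\gamma$ and its reverse $\iota\gamma$ have the same length, so the two periodic Birkhoff sums agree up to $2N\|r\|_\infty$ --- an identity that already survives the passage to the one-sided roof function because Lemma \ref{sinai} preserves sums around periodic orbits --- and then compares arbitrary points of $[z_0,\ldots,z_{n-1}]$ and $[\kappa z_{n-1},\ldots,\kappa z_0]$ with these periodic representatives via the H\"older estimate. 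You instead establish the exact identity $r^n(\bar\kappa\sigma^n x')=r^n(x')$ on the two-sided shift from the semiconjugacy together with $\vartheta\circ\overline\kappa=\iota\circ\vartheta$, and then transfer to $\Sigma_A^+$. This works, but it needs two inputs you should make explicit: (a) since $\phi_a(p)=\phi_b(p)$ only forces $a-b$ to lie in the period lattice of $p$, your density-and-continuity step requires that the set of $w$ with $\vartheta(w)$ on a periodic orbit has empty interior, which follows from $\vartheta$ being injective on a residual set (Proposition \ref{symdyn}(2)); and (b) your transfer uses Sinai's lemma in the cohomological form $r=r'+u\circ\sigma-u$ with $u$ bounded, which is stronger than the statement of Lemma \ref{sinai} as recorded (equality of periodic sums only) but is indeed what the construction in \cite{PP} provides --- the ``Livsic-type'' gloss is not the right justification, though it is harmless. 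Both arguments produce a constant bound $D_n=D$, more than weak symmetry demands. The paper's reduction to closed orbits is shorter because the symmetry $l(\iota\gamma)=l(\gamma)$ is immediate and no coboundary bookkeeping is needed; your version buys a pointwise symmetry statement on the two-sided shift, at the cost of the extra care in (a) and (b).
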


\begin{proof}
It suffices to show that $r$ is weakly symmetric.
Since $\sigma : \Sigma_A^+ \to \Sigma_A^+$ is mixing, there exists $N \geq 0$ such that,
for each length $n$ cylinder 
$[\underline z] = [z_0,\ldots,z_{n-1}]$, we may find a periodic point $x \in [\underline z]$
of period $n+N$. Writing $x = (x_0,\ldots,x_{n+N-1},x_0,\ldots)$, we set
$\kappa x = (\kappa x_{n+N-1},\ldots,\kappa x_0,\kappa x_{n+N-1},\ldots)$. Clearly,
$\sigma^N(\kappa x)$ is a periodic point of period $n+N$ and 
$\sigma^N(\kappa x) \in [\kappa \underline z]$. Furthermore,
$r^{n+N}(x) = l(\gamma)$, for some $\phi$-periodic orbit $\gamma$
and
\[
r^{n+N}(\sigma^N(\kappa x)) = r^{n+N}(\kappa x) = l(\iota \gamma) = l(\gamma),
\]
where $\iota \gamma$ is the time-reversed periodic orbit corresponding to $\gamma$.
We therefore have
\begin{align*}
\exp(r^n(x)-r^n(\sigma^N(\kappa x)))
&= \exp((l(\gamma)-r^N(\sigma^n x)) - (l(\gamma)-r^N(\sigma^{n+N}(\kappa x)))) \\
&= \exp (r^N(\sigma^{n+N}(\kappa x)) - r^N(\sigma^n x)) \leq \exp(2N \|r\|_\infty),
\end{align*}
for some constant $C>0$.

Now let $x' \in [\underline z]$ and $y' \in [\kappa \underline z]$ be arbitrary.
We have
\begin{align*}
\exp(r^n(x')-r^n(y')) 
&= \exp(r^n(x)-r^n(\sigma^N(\kappa x)))
\frac{\exp(r^n(x')-r^n(x))}{\exp(r^n(y')-r^n(\sigma^N(\kappa x)))} \\
&\leq \exp(NC) \exp(2c/(1-2^{-\alpha})),
\end{align*}
where $r$ satisfies the H\"older condition $|r(x)-r(y)|\leq cd(x,y)^\alpha$. This completes the proof. 
\end{proof}

\section{Zeta functions}

In this section we shall prove that the equality of $h=h(M)$ and $h(\widetilde M)$ is equivalent 
to amenability of $G$.
To do this, we 
need to relate the growth of closed geodesics in $\mathcal C(\widetilde M,W)$
or, equivalently, of periodic $\widetilde \phi$-orbits which intersect $p^{-1}(W)$,
 to the Gurevi\v{c}
pressure. 
To do this, we make a particular choice of $W$, setting
$
W = \bigcup_{i=1}^k \mathrm{int}(\widetilde R_i),
$
where $\widetilde R_i$ is the thickened Markov section 
\[
\widetilde R_i =\{\widetilde \phi(x) \hbox{ : } x \in \widetilde T_i, \ 0 \leq t \leq \epsilon\}
\]
and $0 < \epsilon \leq \inf r$.
We now define a zeta function, analogous to the usual zeta function
for a flow but associated to
$\mathcal C(\widetilde M,W)$, by
\[
\zeta(s) = \prod_{\gamma \in \mathcal{C}'(\widetilde M,W)}
(1-e^{-sl(\gamma)})^{-1}
= \exp \sum_{m=1}^\infty \sum_{\gamma \in \mathcal C'(\widetilde M,W)} 
\frac{e^{-sml(\gamma)}}{m},
\]
where $\mathcal C'(\widetilde M,W)$ denotes the prime closed geodesics in 
$\mathcal C(\widetilde M,W)$.
This has abscissa of convergence $\widetilde h := h(\widetilde M)$.
A similar function may be defined using the set $\mathcal P'$ 
of prime periodic $\widetilde \sigma$-orbits  which intersect
$\Sigma_A^+ \times \{1_G\}$:
\[
Z(s) = \prod_{\tau \in \mathcal P'}
(1-e^{-s\lambda(\tau)})^{-1},
\]
where, for $\tau = \{(x,g),\widetilde \sigma(x,g),\ldots,\widetilde \sigma^{n-1}(x,g)\}$,
$\lambda(\tau) = r^n(x)$.
(One can, of course, describe this in terms of a suspended semi-flow over 
$\Sigma_A^+ \times G$ but this would make the notation more cumbersome.)
A standard calculation gives
\[
Z(s) = \exp \sum_{n=1}^\infty \frac{1}{n}
\sum_{(x,g) \in \mathcal P_n}
e^{-sr^n(x)},
\]
where
\[
\mathcal P_n =\{(x,g)  
\hbox{ : } \widetilde \sigma^n(x,g) =(x,g) \hbox{ and }
\widetilde \sigma^m(x,g) \in \Sigma_A^+ \times \{1_G\} \hbox{ for some } 0 \leq m <n\}.
\]
It is this last function that will be related to Gurevi\v{c} pressure.

The next lemma follows immediately from Lemma \ref{bowenmanning}.
In particular, the discrepancy between the number of
periodic $\widetilde \phi$-orbits with $l(\gamma) \leq T$
and the number of
periodic $\widetilde \sigma$-orbits with  $\lambda(\tau) = r^n(x) \leq T$ is at worst 
$O(e^{h'T})$.

\begin{lemma}
$\zeta(s)/Z(s)$ is analytic and non-zero for $\mathrm{Re}(s)> h'$.
\end{lemma}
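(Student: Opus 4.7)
The plan is to deduce the lemma from Corollary \ref{cortobowenmanning} by expressing $\zeta(s)/Z(s)$ as the exponential of a Dirichlet series and bounding that series by a counting discrepancy. Since
\[
\log \zeta(s) - \log Z(s) = \sum_{m=1}^\infty \frac{1}{m}\Biggl( \sum_{\gamma \in \mathcal C'(\widetilde M, W)} e^{-sml(\gamma)} - \sum_{\tau \in \mathcal P'} e^{-sm\lambda(\tau)} \Biggr),
\]
it suffices to produce a natural period-preserving correspondence $\gamma \leftrightarrow \tau$ whose symmetric difference has exponential growth rate at most $h'$.

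First I would set up this correspondence via the lifted symbolic coding. By the definition $W = \bigcup_i \mathrm{int}(\widetilde R_i)$ and by Lemma \ref{technicalclaim}, a prime periodic $\widetilde\phi$-orbit meets $W$ precisely when it crosses some $1_G \cdot \widetilde T_i$, and under the coding this matches a prime periodic $\widetilde\sigma$-orbit visiting $\Sigma_A^+ \times \{1_G\}$ with equal period ($l(\gamma) = \lambda(\tau)$). The failure of bijectivity comes solely from the base Bowen-Manning discrepancy: the coding $\vartheta$ is not one-to-one on periodic orbits.

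Second, I would bound this discrepancy. Corollary \ref{cortobowenmanning} gives $N_\phi(T) - N_{\sigma^r}(T) = O(e^{h'T})$ on the base. Each excess base orbit with trivial $G$-holonomy contributes only a controlled number of cover closed geodesics meeting $W$ (at most one lift per visited Markov section, polynomial in $T$), so the cover discrepancy is also bounded by $e^{(h' + o(1))T}$. Hence the Dirichlet series above converges absolutely on $\{\mathrm{Re}(s) > h'\}$, and exponentiating yields the lemma.

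The main obstacle is verifying that the base-to-cover passage does not inflate the exponential growth rate beyond $h'$ — which reduces to the facts that closed lifts require trivial $G$-holonomy and that the Markov partition is finite, so that the multiplicity of cover lifts meeting $W$ is at most polynomial in the orbit period and can therefore be absorbed.
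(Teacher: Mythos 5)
Your argument is correct and follows the same route as the paper, which simply observes that the lemma is immediate from Lemma \ref{bowenmanning}: the discrepancy between periodic $\widetilde\phi$-orbits meeting $W$ and periodic $\widetilde\sigma$-orbits meeting $\Sigma_A^+\times\{1_G\}$ grows at rate at most $h'$, so the logarithm of $\zeta(s)/Z(s)$ converges for $\mathrm{Re}(s)>h'$. Your additional remarks on trivial holonomy and the polynomially bounded multiplicity of lifts meeting $W$ just make explicit the step the paper leaves implicit.
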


\begin{corollary} \label{comparezetaZ}
$\zeta(s)$ has abscissa of convergence $h$ if and only if $Z(s)$ has abscissa 
of convergence $h$.
\end{corollary}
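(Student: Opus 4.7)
The plan is to deduce Corollary \ref{comparezetaZ} directly from the preceding lemma by an application of the Landau--Pringsheim theorem on singularities of series with non-negative coefficients. Both $\zeta$ and $Z$ are exponentials of (generalised) Dirichlet series whose coefficients are non-negative, namely $\sum_{m,\gamma} e^{-sml(\gamma)}/m$ and $\sum_{n,(x,g)} e^{-sr^n(x)}/n$ respectively, so by Landau's theorem the abscissa of convergence of each coincides with the location of the first real singularity of its analytic continuation.

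From the preceding lemma, $\zeta(s)/Z(s)$ is analytic and non-zero on $\mathrm{Re}(s) > h'$, so both this function and its reciprocal $Z(s)/\zeta(s)$ are analytic on that half-plane. Since $h' < h$ strictly (by Corollary \ref{cortobowenmanning}), I can freely transfer analyticity and singularities across the identity $\zeta = (\zeta/Z)\cdot Z$ in any subregion of $\mathrm{Re}(s) > h'$, and in particular in a neighbourhood of $s = h$.

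Suppose first that $\zeta$ has abscissa of convergence $h$, so that $\zeta$ is analytic on $\mathrm{Re}(s) > h$ with a singularity at $s = h$. Then $Z = \zeta \cdot (Z/\zeta)$ is a product of two functions analytic on $\mathrm{Re}(s) > h$, hence is itself analytic there, so the abscissa of $Z$ is at most $h$. On the other hand, $Z/\zeta$ is analytic and non-zero at $s = h$, so $Z$ must inherit the singularity of $\zeta$ at $s=h$; by Landau, this forces the abscissa of $Z$ to be at least $h$, and so equal to $h$. The reverse implication follows by interchanging the roles of $\zeta$ and $Z$ in exactly the same argument.

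There is no genuine obstacle in this proof; the only small care required is the invocation of Landau's theorem in the generalised Dirichlet setting (real rather than integer exponents), which is standard. Everything else is routine complex-analytic bookkeeping enabled by the strict inequality $h' < h$.
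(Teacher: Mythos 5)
Your argument is correct and is exactly the route the paper intends: the corollary is deduced from the preceding lemma (analyticity and non-vanishing of $\zeta(s)/Z(s)$ on $\mathrm{Re}(s)>h'$ with $h'<h$) by transferring the Landau singularity at the abscissa of convergence between $\zeta$ and $Z$, which is what you do. The only point you compress is that Landau's theorem applies to the logarithms $\log\zeta$ and $\log Z$ (the Dirichlet series with non-negative coefficients), from which the singularity of $\zeta$, resp.\ $Z$, at its abscissa follows since these functions are bounded below by $1$ on the real axis near the abscissa; this is standard and does not affect the proof.
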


Let
\[
\mathcal Q_n
=\{x \hbox{ : } \sigma^nx=x \hbox{ and } \psi_n(x)=1_G\}.
\]

\begin{lemma} \label{comparison}
For all $n \geq 1$, 
$
\#\mathcal Q_n \leq \#\mathcal P_n \leq n \#\mathcal Q_n.
$
Hence, for $s \in \mathbb R$,
\[
\sum_{\substack{\sigma^n x=x \\ \psi_n(x)=1_G}}
e^{-sr^n(x)} 
\leq
\sum_{\substack{\widetilde \sigma^n(x,g) = (x,g) \\ \exists
0 \leq m < n \hbox{ : } \widetilde \sigma^m(x,g) \in \Sigma_A^+ \times \{1_G\}}}
e^{-sr^n(x)}
\leq
n
\sum_{\substack{\sigma^n x=x \\ \psi_n(x)=1_G}}
e^{-sr^n(x)} .
\]
\end{lemma}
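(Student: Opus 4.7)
The strategy is to establish the two cardinality inequalities $\#\mathcal{Q}_n \leq \#\mathcal{P}_n \leq n\,\#\mathcal{Q}_n$ by constructing explicit maps, and then to notice that $r^n$ is constant along $\sigma$-orbits of period dividing $n$ (since $r^n(\sigma y) - r^n(y) = r(\sigma^n y) - r(y) = 0$ when $\sigma^n y = y$), so the weighted sum inequalities follow at once from the cardinality ones.

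For the lower bound, I would observe that the map $y \mapsto (y, 1_G)$ embeds $\mathcal{Q}_n$ into $\mathcal{P}_n$: if $y \in \mathcal{Q}_n$ then $\widetilde{\sigma}^n(y,1_G) = (\sigma^n y, \psi_n(y)) = (y, 1_G)$, and the return-to-identity condition in the definition of $\mathcal{P}_n$ is trivially witnessed by $m = 0$. This is manifestly injective.

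For the upper bound, I would define $\Phi : \mathcal{P}_n \to \mathcal{Q}_n$ by $\Phi(x,g) = \sigma^{m(x,g)} x$, where $m(x,g) \in \{0,\ldots,n-1\}$ is the least index with $\widetilde{\sigma}^{m(x,g)}(x,g) \in \Sigma_A^+ \times \{1_G\}$. To see that $\Phi$ is well-defined, write $m = m(x,g)$ and $y = \sigma^m x$; then $(y, 1_G) = \widetilde{\sigma}^m(x,g)$, and the computation
\[
\widetilde{\sigma}^n(y, 1_G) = \widetilde{\sigma}^m\bigl(\widetilde{\sigma}^n(x,g)\bigr) = \widetilde{\sigma}^m(x,g) = (y, 1_G)
\]
yields both $\sigma^n y = y$ and $\psi_n(y) = 1_G$, so $y \in \mathcal{Q}_n$. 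To bound fibers, fix $y \in \mathcal{Q}_n$; a preimage $(x,g)$ is determined by the value $m = m(x,g) \in \{0,\ldots,n-1\}$, since $x$ is then the unique point on the $\sigma$-orbit of $y$ satisfying $\sigma^m x = y$, and $g = \psi_m(x)^{-1}$ is forced by $\widetilde{\sigma}^m(x,g) = (y, 1_G)$. Hence $|\Phi^{-1}(y)| \leq n$, giving $\#\mathcal{P}_n \leq n\,\#\mathcal{Q}_n$.

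The sum inequalities then follow instantly: for $(x,g) \in \mathcal{P}_n$ we have $r^n(x) = r^n(\Phi(x,g))$ by the $\sigma$-invariance of $r^n$ on period-$n$ orbits, so
\[
\sum_{(x,g) \in \mathcal{P}_n} e^{-s r^n(x)} = \sum_{y \in \mathcal{Q}_n} |\Phi^{-1}(y)|\, e^{-s r^n(y)},
\]
which is sandwiched between $\sum_{y \in \mathcal{Q}_n} e^{-s r^n(y)}$ and $n \sum_{y \in \mathcal{Q}_n} e^{-s r^n(y)}$. The only delicate point in the argument is verifying that the "return base point" $\Phi(x,g)$ genuinely lies in $\mathcal{Q}_n$; this is where one uses that $(x,g)$ is actually $\widetilde{\sigma}^n$-periodic (not merely that its orbit crosses $\Sigma_A^+ \times \{1_G\}$), via the commutation $\widetilde{\sigma}^n \widetilde{\sigma}^m = \widetilde{\sigma}^m \widetilde{\sigma}^n$ exploited above.
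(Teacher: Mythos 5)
Your proof is correct and follows essentially the same counting argument as the paper: the key point in both is that $\widetilde\sigma^m(x,g)\in\Sigma_A^+\times\{1_G\}$ forces $g=\psi_m(x)^{-1}$, so at most $n$ group elements sit over each base periodic point. The only difference is cosmetic: the paper fibers over the simpler projection $(x,g)\mapsto x$ (whose image already lies in $\mathcal Q_n$, since $\widetilde\sigma^n(x,g)=(x,g)$ forces $\sigma^n x = x$ and $\psi_n(x)=1_G$), which makes your auxiliary observation that $r^n$ is constant along period-$n$ orbits unnecessary.
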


\begin{proof}
Since $\widetilde \sigma^n(x,g) = (x,g)$ if and only if 
$\sigma^nx=x$ and $\psi_n(x)=1_G$, the first inequality follows by considering the the injection
$\mathcal Q_n \to \mathcal P_n : x \mapsto (x,1_G)$.
On the other hand, when
$\widetilde \sigma^n(x,g) =(x,g)$, the condition 
$\widetilde \sigma^m(x,g) \in \Sigma_A^+ \times \{1_G\}$
is equivalent to $\psi_m(x) g = 1_G$, i.e. $g = \psi_m(x)^{-1}$.
So, for each $\sigma^nx=x$ with $\psi_n(x)=1$, $\{g \in G \hbox{ : } (x,g) \in \mathcal P_n\} \subset
\{1,\psi(x)^{-1},\ldots,\psi_{n-1}(x)^{-1}\}$ and hence has cardinality at most $n$.
This proves the second inequality.
\end{proof}

Now, as promised, we relate the abscissa of convergence of $Z(s)$ to
Gurevi\v{c} pressure.

\begin{lemma} \label{aocforZ}
The abscissa of convergence of $Z(s)$ is the unique real number $\xi$ for which
$P_G(\widetilde \sigma,-\xi r)=0$.
\end{lemma}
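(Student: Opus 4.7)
The plan is to read off the abscissa of convergence of $Z(s)$ from the Cauchy--Hadamard/root-test criterion applied to its Dirichlet-like series expansion
\[
\log Z(s) = \sum_{n=1}^\infty \frac{1}{n} \sum_{(x,g) \in \mathcal P_n} e^{-sr^n(x)},
\]
and identify the exponential growth rate of the inner sums with the Gurevi\v{c} pressure $P_G(\widetilde\sigma,-s\widetilde r)$. The bridge between the two is Lemma \ref{comparison}: after taking $(1/n)\log$ of
\[
\sum_{x \in \mathcal Q_n} e^{-sr^n(x)} \leq \sum_{(x,g) \in \mathcal P_n} e^{-sr^n(x)} \leq n \sum_{x \in \mathcal Q_n} e^{-sr^n(x)},
\]
the factor of $n$ disappears, and the right-hand side's limsup is, by the formula for the Gurevi\v{c} pressure of a skew product recorded in Section 3, precisely $P_G(\widetilde\sigma,-s\widetilde r)$. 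Thus the coefficients $a_n(s) := \sum_{(x,g) \in \mathcal P_n} e^{-sr^n(x)}$ satisfy $\limsup_n (1/n)\log a_n(s) = P_G(\widetilde\sigma,-s\widetilde r)$.

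Next I would apply the elementary observation that for non-negative $a_n$, the series $\sum_n a_n/n$ converges when $\limsup_n (1/n)\log a_n < 0$ and diverges when $\limsup_n (1/n)\log a_n > 0$ (the $1/n$ weight is irrelevant on the exponential scale). This gives that the abscissa of convergence of $Z(s)$ is
\[
\xi = \inf\{s \in \mathbb R : P_G(\widetilde\sigma,-s\widetilde r) < 0\} = \sup\{s \in \mathbb R : P_G(\widetilde\sigma,-s\widetilde r) > 0\}.
\]

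To pass from this to the asserted characterisation of $\xi$, I would establish that $s \mapsto P_G(\widetilde\sigma,-s\widetilde r)$ is strictly decreasing and continuous, so that it has a unique zero which coincides with $\xi$. Strict monotonicity comes from the uniform lower bound $r \geq r_{\min} > 0$ afforded by Proposition \ref{symdyn}: since $-s_1 r - (-s_2 r) \geq (s_2-s_1)r_{\min}$ pointwise for $s_1 < s_2$, the corresponding periodic Birkhoff sums are shifted by $n(s_2-s_1)r_{\min}$, which immediately yields $P_G(\widetilde\sigma,-s_1 \widetilde r) \geq P_G(\widetilde\sigma,-s_2\widetilde r) + (s_2-s_1)r_{\min}$. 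Continuity follows since pressure, being a supremum of affine functions in the potential, is convex in $s$ (hence continuous on the interior of where it is finite). Existence of a zero follows by noting at one end that $P_G(\widetilde\sigma,-h\widetilde r) \leq P(\sigma,-hr) = 0$ by the general inequality $P_G(\widetilde\sigma,\widetilde f) \leq P(\sigma,f)$ together with Lemma \ref{pressureforr}, and at the other end that for $s$ sufficiently negative the pressure is positive (it already dominates $-s\,r_{\min}$).

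The step I expect to be most delicate is verifying strict monotonicity cleanly in the countable-state setting, since general features of pressure on compact systems cannot be quoted directly; but since the perturbation $(s_2-s_1)r$ is uniformly bounded below, the comparison of Birkhoff sums passes through immediately to the limsup defining $P_G$, so this will not require new ideas. Everything else is a bookkeeping exercise combining Lemma \ref{comparison} with the Gurevi\v{c}-pressure formula for the skew product already stated in Section 3.
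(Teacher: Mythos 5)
Your argument is correct and follows essentially the same route as the paper: Lemma \ref{comparison} to identify the growth rate of the coefficient sums with $P_G(\widetilde\sigma,-s r)$, the root-test dichotomy for convergence/divergence of $\log Z(s)$, strict monotonicity from the positive lower bound on $r$ (the paper uses $r_0=\inf\{r^n(x)/n\}$, an immaterial variation), and convexity/continuity of the Gurevi\v{c} pressure to pin down the unique zero. The only cosmetic difference is how nontriviality is ensured: the paper invokes transitivity of $\widetilde\sigma$ to rule out $P_G\equiv-\infty$, while you anchor the zero between $s=h$ (via $P_G\leq P$ and Lemma \ref{pressureforr}) and large negative $s$, which implicitly uses the same nonemptiness of $\mathcal Q_n$.
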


\begin{proof}
It follows from Lemma \ref{comparison} that
\[
\sum_{x \in \mathcal Q_n}
e^{-sr^n(x)} 
\leq
\sum_{(x,g) \in \mathcal P_n}
e^{-sr^n(x)}
\leq
n
\sum_{x \in \mathcal Q_n}
e^{-sr^n(x)} 
\]
and hence that
\[
\limsup_{n \to \infty} \frac{1}{n} \log 
\sum_{(x,g) \in \mathcal P_n}
e^{-sr^n(x)}
=
\limsup_{n \to \infty} \frac{1}{n} \log \sum_{x \in \mathcal Q_n}
e^{-sr^n(x)}.
\]
Since
\[
P_G(\widetilde \sigma,-\xi r) = \limsup_{n \to \infty} \frac{1}{n} \log \sum_{x \in \mathcal Q_n}
e^{-\xi r^n(x)},
\]
we have that $Z(\xi)$ converges if $P_G(\widetilde \sigma,-\xi r)<0$ and diverges if 
$P_G(\widetilde \sigma,-\xi r)>0$.

The Gurevi\v{c} pressure is convex and hence continuous (\cite{sarignotes}, Proposition 4.4).
Choose $\xi,\xi' \in \mathbb R$ with $\xi< \xi'$ Write $r_0 = \inf \{r^n(x)/n \hbox{ : } \sigma^n x =x, \
n \geq 1\}>0$.
Then, for $\sigma^n x=x$,
$
e^{-\xi'r^n(x)} \leq e^{-\xi r^n(x)} e^{-n(\xi'-\xi) r_0}$.
Thus
\begin{align*}
P_G(\widetilde \sigma,-\xi' r) 
&= \limsup_{n \to \infty} \frac{1}{n} \log
\sum_{x \in \mathcal Q_n} e^{-\xi' r^n(x)}
\\
&\leq \limsup_{n \to \infty} \frac{1}{n} \log
\left( e^{-n(\xi' -\xi) r_0} \sum_{x \in \mathcal Q_n} e^{-\xi r^n(x)}\right)
\\
&= -(\xi' -\xi) r_0 + P_G(\widetilde \sigma,-\xi r) < P_G(\widetilde \sigma, -\xi r),
\end{align*}
so that $P_G(\widetilde \sigma,-\xi r)$ is a strictly decreasing function. 
Furthermore, the transitivity of 
$\widetilde \sigma : \widetilde \Sigma^+ \to \widetilde \Sigma^+$ ensures that 
$P_G(\widetilde \sigma,-\xi r)$ is not everywhere $-\infty$. Hence there is a unique
$\xi \in \mathbb R$ such that $P_G(\widetilde \sigma,-\xi r)=0$. 
By the above characterisation, this is the abscissa 
of convergence of $Z(s)$.
\end{proof}

We may now prove our main result, formulated for closed geodesics.

\begin{theorem}\label{mainresultforclosedgeodesics} 
Let $\Gamma$ be a convex co-compact group of isometries of a pinched Hadamard manifold
$X$ and let $\Gamma_0$ be a normal subgroup of $\Gamma$. Then $h(X/\Gamma_0) = h(X/\Gamma)$ if and only if
$G = \Gamma/\Gamma_0$ is amenable.
\end{theorem}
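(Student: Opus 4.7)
The plan is to translate the geometric equality $h(\widetilde M) = h(M)$ into an equality of Gurevi\v{c} pressures on the skew product extension $\widetilde \sigma : \Sigma_A^+ \times G \to \Sigma_A^+ \times G$ constructed in Section 6, and then conclude via Stadlbauer's amenability criterion (Proposition \ref{stadlbauer}).

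First I would dispose of the degenerate case $\Gamma_0 = \{1\}$. Here $G = \Gamma$ is non-elementary Gromov hyperbolic and hence non-amenable, while $\widetilde M = X$ supports no closed geodesics, so $h(\widetilde M) = 0 < h(M)$ and the equivalence holds trivially. From now on I assume $\Gamma_0$ is nontrivial, so that $G \neq \pi_1(M)$ and the skew product $\widetilde \sigma$ is transitive.

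Next, write $h = h(M)$, which by Lemma \ref{pressureforr} is characterised as the unique real number for which $P(\sigma, -hr) = 0$. The heart of the argument is the chain of equivalences
\[
h(\widetilde M) = h
\iff \widetilde h = h
\iff \text{abscissa of } Z(s) = h
\iff P_G(\widetilde \sigma, -hr) = 0
\iff P_G(\widetilde \sigma, -hr) = P(\sigma, -hr),
\]
where the first step identifies $h(\widetilde M)$ with the abscissa of convergence $\widetilde h$ of $\zeta(s)$, the second is Corollary \ref{comparezetaZ}, the third is Lemma \ref{aocforZ}, and the fourth uses $P(\sigma, -hr) = 0$ together with the general inequality $P_G(\widetilde \sigma, \widetilde f) \leq P(\sigma, f)$.

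The final equivalence with amenability of $G$ is then obtained by applying Proposition \ref{stadlbauer} to $f = -hr$. Its hypotheses are all furnished by earlier sections: $\sigma : \Sigma_A^+ \to \Sigma_A^+$ is mixing by Proposition \ref{symdyn}; the skew product $\widetilde \sigma$ is symmetric by the second part of Lemma \ref{technicalclaim} and transitive by the reduction above; and $-hr$ is weakly symmetric by Lemma \ref{risweaksymm}. I expect no essential obstacle at this point: the genuine difficulty has already been absorbed into the earlier machinery, namely packaging the covering data into a symmetric, transitive skew product and matching the abscissae of the orbit-counting zeta functions to Gurevi\v{c} pressures. Once these ingredients are in place, the present theorem is essentially a concatenation of those results via Stadlbauer's dichotomy.
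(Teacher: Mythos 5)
Your proposal is correct and follows essentially the same route as the paper: characterise $h$ by $P(\sigma,-hr)=0$ (Lemma \ref{pressureforr}), match the abscissae of $\zeta(s)$ and $Z(s)$ to the Gurevi\v{c} pressure via Corollary \ref{comparezetaZ} and Lemma \ref{aocforZ}, and conclude with Stadlbauer's dichotomy (Proposition \ref{stadlbauer}) applied to $f=-hr$. Your explicit disposal of the case $\Gamma_0=\{1\}$, which guarantees the transitivity hypothesis for the skew product, is a minor point the paper leaves implicit; otherwise the two arguments coincide.
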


\begin{proof}
By Lemma \ref{pressureforr}, we have $P(\sigma, -hr)=0$ and, by Proposition \ref{stadlbauer}, 
$P_G(\widetilde \sigma,-hr) < P(\sigma,-hr)$ unless $G$ is amenable, in which case equality holds.
Hence, if $G$ is amenable then $P(\widetilde \sigma,-hr) =0$ and so $\widetilde h = h$.
On the other hand, if $G$ is not amenable then 
$P_G(\widetilde \sigma,-\xi r)=0$ for some $\xi<h$ and so, by Corollary \ref{comparezetaZ}
and Lemma \ref{aocforZ}, $\widetilde h <h$.
\end{proof}

\begin{remark}
We could also have proved that equality of critical exponents implies amenability directly by replacing Stadlbauer's result with a recent result  of Jaerisch
\cite{Jaerisch},
in which the Gurevi\v{c} pressure is replaced 
 by the logarithm of the spectral radius of a transfer operator associated to 
 $\widetilde \sigma$ acting on a suitably chosen Banach space,
together with some approximation arguments along the lines of those
used in \cite{Poll, PS94}.
\end{remark}

\end{document}